\newcommand\vneq{\mathrel{\ooalign{$=$\cr\hidewidth$|$\hidewidth\cr}}}
\newcommand{\mychi}{\raisebox{0pt}[1ex][1ex]{$\chi$}}
\newcommand{\myxi}{\raisebox{0pt}[1ex][1ex]{$\xi$}}
\newcommand{\myg}{\raisebox{0pt}[0.5ex][0.5ex]{$\gamma$}}
\newcommand{\eqspecialnum}{
  \renewcommand{\theequation}{\arabic{equation}*}}
\newtheorem{thm}{Theorem}[section]
\newtheorem{prop}[thm]{Proposition}
\newtheorem{cor}[thm]{Corollary}
\theoremstyle{definition}
\newtheorem{defn}[thm]{Definition} 
\newtheorem{exmp}{Example}[section]
\theoremstyle{remark}
\newtheorem{rem}{Remark}
\newcommand{\spt}{\textnormal{spt}}
\title[Hypergroup Deformations of Semigroups]{Hypergroup Deformations of Semigroups}
\author{Vishvesh Kumar}
\address{Vishvesh Kumar \endgraf Department of Mathematics \endgraf Indian Institute of Technology Delhi \endgraf New Delhi - 110 016, India.} 
\email{vishveshmishra@gmail.com}
\author{Kenneth A. Ross} 
\address{ Kenneth A. Ross, \endgraf Prof Emeritus  \endgraf Department of Mathematics \endgraf University of Oregon \endgraf  Eugene, OR 97403, USA.} 
\email{kenross.math@gmail.com}
\author{Ajit Iqbal Singh}
\address{Ajit Iqbal Singh \endgraf INSA Emeritus Scientist\endgraf The Indian National Science Academy \endgraf New Delhi - 110002, India.} 
\email{ajitis@gmail.com}
\begin{document}

\begin{abstract} We view the well-known example of the dual of a countable compact hypergroup, motivated by the orbit space of p-adic integers by Dunkl and Ramirez (1975), as hypergroup deformation of the max semigroup structure on the linearly ordered set $\mathbb{Z}_+$ of the non-negative integers along the diagonal. This works as motivation for us to study hypergroups or semi convolution spaces arising from ``max" semigroups or general commutative semigroups via hypergroup deformation on idempotents. 

\end{abstract}
\keywords{ Semigroups, ``Max" semigroups, Discrete hypergroups, Discrete semiconvos, Dunkl-Ramirez example,  Hypergroup deformation of idempotents,   Dual hypergroups, Semiconvo deformation}
\subjclass[2010]{Primary  43A62, 20M14}
\maketitle

\section{Introduction}
We introduce and study hypergroups, same as convolution spaces, in short, convos, or, semi convolution spaces, in short, semiconvos \cite{Jewett} arising from general commutative semigroups via deformation of the part of diagonal consisting of the idempotents. The genesis was the well-known example related to the orbit space of p-adic integers by Dunkl and Ramirez \cite{Ramirez} viewed with this perspective on one hand and a good account of the structure of measure algebras of certain linearly ordered semigroups with order topology in \cite{Zuck} and \cite{Ross} on the other hand (see also \cite{Hofmann}). There is a substantial development of hypergroup deformations of groups and their applications. To get an idea one can see \cite{Xu} and \cite{Kawakami}. The present work can be seen as a complement of that because a group has no idempotents other than the identity.

The next section gives basics of semigroups and hypergroups in the form that we need with a little touch of novelty at a few places. We begin Section 3 with an attempt to make a ``max" semigroup $(S, <, \cdot)$ with the discrete topology into a hermitian discrete hypergroup by deforming the product on the diagonal. Amongst other things, we arrive at the result that this can be done if and only if either $S$ is finite or $S$ is isomorphic to $(\mathbb{Z}_+,<,\text{max}).$ We determine its  dual and show that it becomes a countable compact hermitian  hypergroup with respect to pointwise multiplication. In Section 4, we prove our main theorem on semiconvo or hypergroup deformations of idempotents in commutative semigroups.

Let $\mathbb{Z}_+= \mathbb{N} \cup \{0\}$ and $\times,$ the usual multiplication. For a subset $T$ of $S,$ $\mychi_T$ denotes the characteristic function of $T$ defined on $S.$ For notational convenience, we take empty sums to be zero. 
\section{Basics of semigroups and hypergroups} \label{Basic}
For basics of semigroups and hypergroups one can refer to standard  books, monographs and research papers. For instance, one can see (\cite{Hewitt}, \cite{DR}, \cite{Hugo}, \cite{Wolf} \cite{Zuck}, \cite{Ross}, \cite{Hofmann}) for semigroups and (\cite{Dunkl}, \cite{Jewett}, \cite{Spector}, \cite{Ramirez},  \cite{Bloom}, \cite{redbook}, \cite{greenbook}, \cite{Lasser}) for hypergroups. However, we give below some of them in the form we need.
\subsection{Basics of semigroups} \label{lsg}\begin{itemize} \item[(i)]  As in \cite{Ross}, we consider a non-empty set $S$ linearly ordered by the relation `$<$'. \begin{itemize}

\item[(a)] For  $m,n \in S,$ we define $m \cdot n= \text{max}\{m,n\}.$ This makes it into a commutative semigroup. In this paper, we call such a semigroup $(S,<,\cdot)$ a {\it``max" semigroup}. At times, we will write it as $(S,<,\text{max}).$ Further, for $m,n \in S,$ we write $\mathcal{L}_n = \{ k \in S : k < n \},$\, $\mathcal{U}_m = \{k \in S : m <k \}.$
\item[(b)] The linear order `$<$' and the ``max" semigroup operation as in (a) above are appropriately related in the sense that  $m,n,k \in S$ and $m\leq n$ together imply that $mk\leq nk$ as specified in Definition 0.1 of linearly ordered semigroup in \cite{Hofmann}. 
\item[(c)] We may define another product `$\cdot$' by $\text{min}\{m,n\}$ but we stick to max unless needed for some explicit purpose.
\end{itemize}
\item[(ii)] We call a commutative semigroup $(S, \cdot)$ {\it max-min type} if for $m,n \in S,\, m\cdot n$ is $m$ or $n.$ Clearly, a commutative semigroup is max-min type if and only if it becomes a ``max" semigroup via : for $m,n \in S,\, m<n$ if and only if $m\cdot n=n \neq m.$
\item[(iii)] Let $(S,\cdot)$ be a semigroup with identity $e.$ For $m,n \in S$ we usually write $m\cdot n = mn.$
\begin{itemize}
\item[(a)] A non-empty subset $T$ of $S$ is called an {\it ideal} in $S$ if $TS \subset T$ and $ST \subset T,$ where $TS := \{ts \,: t \in T, s\in S\}$ and similarly for $ST.$ As in \cite{Petrich}, an ideal $T (\vneq S)$ in $S$ will be called a {\it prime ideal} if the complement $S \backslash T$ of $T$ in $S$ is a semigroup.
\item[(b)] Let $E(S)$ denote the set of idempotent elements in $S$, i.e., the set of elements $n \in S$ such that $n^2=n.$ We write $E_0(S)=E(S)\backslash \{e\}$ and $\widetilde{S}= S \backslash E(S).$
\item[(c)] Let $G(S)$ denote the set $\{g \in S: \exists\, h \in S \,\,\text{with}\,\, gh=hg=e\}.$ Then $G(S)$ is a group contained in $S$ called the {\it maximal group}. Note that $G(S) \cap E(S)= \{e\}.$  Set $G_1(S)= \{g \in G(S): \, gm=m \, \text{for all} \, m \in E_0(S)\}.$ Clearly, $G_1(S)$ is a subgroup of $G(S).$  Note that members of $G_1(S)$ act on $E_0(S)$ as the identity via left multiplication of $(S, \cdot).$ 
\end{itemize}
\end{itemize}

  \begin{defn} Let $(S,\cdot)$ be a semigroup with identity $e$. \begin{itemize}
 \item[(i)]  We call $(S, \cdot)$ {\it inverse-free} in case for $m,n \in S, mn=e$ holds if and only if $m=n=e.$ This condition is equivalent to saying that $m$ or $n$ is equal to $e.$
 
 \item[(ii)] $(S, \cdot)$ is called {\it action-free} if $G_1(S)= \{e\}.$ 
\end{itemize}
\end{defn}
 
\begin{exmp}
\begin{enumerate}
 \item[(i)] $\left(\mathbb{Z}_+,+ \right),\, \left((0,1], \times \right)$ and $ \left([1, \infty ), \times \right)$ are inverse-free semigroups which are not max-min type.
 \item[(ii)] $(\mathbb{Z}_+, \mbox{max})$ and $(\mathbb{Z}_+ \cup \{\infty\}, \mbox{min})$ are max-min type inverse-free  semigroups. 
 \item[(iii)] A commuting  set of orthogonal projections on a Hilbert space $\mathcal{H}$ containing the identity operator $I_{\mathcal{H}}$ on $\mathcal{H},$ with respect to composition of two operators, form a semigroup with identity which is  inverse-free.
 \end{enumerate} \end{exmp} 
 We know that if $(S, \cdot)$ is a commutative semigroup then $(E(S), \cdot)$ is a semigroup. Interrelations are collected in the following proposition whose proof is straight-forward.
 \begin{prop} \label{semi} Let $(S,\cdot)$ be a semigroup with identity $e.$
 \begin{itemize}
 \item[(i)] If $(S,\cdot)$ is inverse-free then $G(S)= \{e\}.$ Converse part is also true if $S$ is commutative.
 \item[(ii)] If $(S, \cdot)$ is max-min type then $(S,\cdot)$ is inverse-free.
 \item[(iii)] Suppose $S\backslash \{e\}$ is non-empty. 
 \begin{itemize}
 \item[(a)] $(S \backslash \{e\}, \cdot)$ is a semigroup if and only if  $(S,\cdot)$ is inverse-free.
 \item[(b)] If $(S, \cdot)$ is inverse-free then $(S\backslash \{e\}, \cdot)$ is an ideal in $(S, \cdot).$
 \end{itemize}

 \item[(iv)]  \label{col E0} Suppose $E_0(S)$ is non-empty. If $(S, \cdot)$ is inverse-free and commutative then $\left(E_0(S), \cdot\right)$ is a semigroup.
 \item[(v)] For a commutative semigroup $(S, \cdot),$ \,$\widetilde{S}$ is an ideal if and only if $\widetilde{S}$ is a prime ideal.
  
 \end{itemize} \end{prop}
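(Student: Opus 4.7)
My plan is to verify each item by direct unraveling of the definitions, leaning throughout on the characterization stated just after Definition 2.1 that inverse-freeness of $(S,\cdot)$ is equivalent to the statement that $mn=e$ forces $m=e$ or $n=e$. I anticipate no serious obstacle; the only point of care is to invoke commutativity at precisely the places where it is needed, namely the converse in (i), all of (iv), and (v).

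For (i), in the forward direction any $g\in G(S)$ comes with $h\in S$ satisfying $gh=e$, and inverse-freeness forces $g=e$. For the converse under commutativity, $mn=e$ gives $nm=e$ as well, placing both $m$ and $n$ in $G(S)=\{e\}$. For (ii), if $mn=e$ in a max-min type semigroup then $mn$ equals $m$ or $n$, so $m=e$ or $n=e$; in either case the other factor is then forced to $e$ by multiplying through. For (iii)(a), closure of $S\setminus\{e\}$ under the product is precisely the contrapositive of inverse-freeness. For (iii)(b), given $m\in S\setminus\{e\}$ and $s\in S$, either $s=e$ and $ms=m\ne e$, or $s\ne e$ and inverse-freeness gives $ms\ne e$; the symmetric argument handles $sm$.

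For (iv), take $m,n\in E_0(S)$. Inverse-freeness forces $mn\ne e$, while commutativity combined with idempotency gives $(mn)^2=m^2n^2=mn$, so $mn\in E_0(S)$. Finally for (v), a prime ideal is by definition already an ideal, so only the forward direction needs argument: assuming $\widetilde S$ is an ideal, the condition $\widetilde S\ne S$ follows from $e\in E(S)$, and $E(S)=S\setminus\widetilde S$ is a semigroup because for $m,n\in E(S)$ commutativity yields $(mn)^2=m^2n^2=mn$. I would flag that the use of $E_0(S)$ in (iv) versus $E(S)$ in (v) is not an accident: part (iv) genuinely needs inverse-freeness to exclude the identity from the product, whereas (v) needs only commutativity.
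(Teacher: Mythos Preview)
Your proof is correct. The paper itself does not give a proof beyond declaring the proposition ``straight-forward,'' so your direct verification by unwinding the definitions is exactly what is intended; you have simply supplied the omitted details, including the fact (stated just before the proposition) that $E(S)$ is a subsemigroup when $S$ is commutative, which is what drives (v).
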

 \begin{prop}{\bf(Dichotomy)}. \label{Dichotomy} Let $S$ be a  semigroup. For $m \in S$ either $m^j, \,j =1,2, \ldots$ are all distinct or $m^j$ is an idempotent for some $j \in \mathbb{N}.$
 \end{prop}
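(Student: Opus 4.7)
The plan is to prove the contrapositive of the ``all distinct'' alternative: if the powers $m^j$ are \emph{not} all distinct, then some power must be an idempotent. This reduces to exhibiting a single $k$ for which $m^{2k}=m^k$.

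First I would suppose the first alternative fails, so there exist positive integers $i<j$ with $m^i=m^j$. Setting $d=j-i\ge 1$, I would show by induction on $n\ge i$ that $m^n=m^{n+d}$: the base case $n=i$ is the hypothesis, and multiplying $m^n=m^{n+d}$ on the right by $m$ propagates the equality to $n+1$. This establishes eventual periodicity of the sequence of powers with period $d$, starting from index $i$.

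Next I would choose a suitable $k$ to force idempotency. Take any multiple of $d$ with $k\ge i$ (for instance $k=id$, which works since $d\ge 1$). Then $k\ge i$ and $k\equiv 0 \pmod d$, so iterating the periodicity relation $m^n=m^{n+d}$ a total of $k/d$ times (which is valid because at each step the exponent remains $\ge i$) gives $m^{k+k}=m^k$. Hence $(m^k)^2=m^k$, i.e., $m^k\in E(S)$, and the second alternative holds.

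There is no genuine obstacle here; the only point that requires a moment's care is verifying that one can legitimately reduce the exponent $2k$ down to $k$ using the periodicity $m^n=m^{n+d}$, which is why I single out a $k$ that is simultaneously $\ge i$ and a multiple of $d$. The dichotomy then follows immediately, noting that commutativity of $S$ is not needed for the argument.
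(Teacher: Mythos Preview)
Your proof is correct; the choice of $k$ as a multiple of $d$ with $k\ge i$ is exactly what is needed to ensure every intermediate exponent in the reduction from $2k$ to $k$ stays at or above $i$, so the periodicity applies at each step. Note that the paper itself states this proposition without proof (it is a standard fact about monogenic subsemigroups), so there is no authorial argument to compare against; your argument is the standard one.
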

 We will say $m$ is of {\it infinite order} in the first case and of {\it finite order} in the second case.

 \begin{exmp} \label{2.2} Here we provide an example of a commutative semigroup $(S, \cdot)$, in which $G(S)=G_1(S) \neq \{e\}$ and thus, $S$ is not action-free. 
 
 We take $S= (\mathbb{Z}_+ \times \{0\}) \cup \{(0,1)\}$ and define `$\cdot$' as follows: \begin{eqnarray*}
 (i,0) \cdot (j,0)&=& (\text{max}\{i,j\},0) \,\,\,\text{for}\,\, i,j\in \mathbb{Z}_+,\\ (j,0) \cdot (0,1)&=& (0,1)\cdot (j,0)=(j,0)\,\, \text{for}\, j \neq 0, \\  (0,0) \cdot (0, 1)&=&(0,1)\cdot(0,0) =(0,1)\, \text{and}\\ (0,1) \cdot (0, 1)&=&(0,0). 
 \end{eqnarray*}
  Then $e= (0,0)$ and $E_0(S)= \mathbb{N} \times \{0\}$. Also $G(S)= \{(0,\alpha): \alpha =0,1 \}=G_1(S).$   
 \end{exmp}
 The semigroup in our next example is action-free but not inverse-free.
 \begin{exmp} \label{2.3} Let $S= (\mathbb{Z}_+, \text{max}) \times (\{0,1\}, \text{addition mod}\,2),$ a commutative semigroup with identity $e= (0,0).$ It is easy to see that $E_0(S)= \mathbb{N} \times \{0\},$ $G(S)= \{(0, \alpha): \alpha = 0,1\},$ and $G_1(S)= \{e\}.$
 \end{exmp}
 In this paper, we shall mainly be concerned with semigroups equipped with the discrete topology, in short, {\it discrete semigroups}.
\begin{rem} \label{nre} Let $(S,<)$ be a linearly ordered set as in Item \ref{lsg} (i).
\begin{itemize}
\item[(i)] Let $(S,<, \text{max})$ be the ``max" semigroup as in Item \ref{lsg} (i)(a) above. We equip $S$ with the order topology $\tau_0$ and assume that $S$ has an identity $e.$ Then $\tau_0$ is discrete if and only if 

(a) $e$ has an immediate successor, and, 

(b) each $m\neq e$ has an immediate predecessor as well as an immediate successor.
\item[(ii)]Two different types of examples for (i) above can be provided by 

(a) $(\mathbb{Z}_+, <, \text{max})$ or its non-empty finite subsets, and, 

(b) $\{0\} \cup \{m \pm \frac{1}{n+2}: m, n \in \mathbb{N}\}$ considered as a subset of the real line with the usual order and topology.
\end{itemize}

\end{rem} 
 
 \subsection{ Basics of hypergroups}
 Here we come to the basics of hypergroups. Dunkl \cite{Dunkl}, Jewett \cite{Jewett} and Spector \cite{Spector} independently created locally compact hypergroups (same as `convos' in \cite{Jewett}) under different names with the purpose of doing standard harmonic analysis. In this paper we are mostly concerned with commutative discrete semiconvos or hypergroups. It is convenient to write the definition in terms of a minimal number of axioms. For instance, see (\cite[Chapter 1]{Lasser}, \cite{Alagh}).

 Let $K$ be a discrete space. Let $M(K)$ be the space of complex-valued regular Borel measures on $K.$ Let $M_F(K)$ and $M_p(K)$ denote the subset of $M(K)$ consisting of measures with finite support and probability measures respectively. Let $M_{F,p}(K)= M_F(K) \cap M_p(K).$ At times, we do not distinguish between $m$ and $\delta_m$ for any $m \in K$ because $m \mapsto \delta_m$ is an embedding from $K$ into $M_p(K).$ Here $\delta_m$ is the unit point mass at $m,$ i.e., the Dirac-delta measure at $m.$ 
  
   We begin with a map $* : K \times K \rightarrow M_{F,p}(K)$. Simple computations enable us to extend `$*$' to a bilinear map called {\it convolution}, denoted by `$*$' again, from $M(K) \times M(K)$ to $M(K).$ At times, for certain $n \in K$ we will write $q_n$ for $ \delta_n* \delta_n$ and $Q_n$ for its support.
   
    A bijective map $\vee: m \mapsto \check{m}$ from $K$ to $K$ is called an {\it involution} if $\check{\check{m}}=m.$ We can extend it to $ M(K)$ in a natural way.  
   
 \begin{defn} \label{semiconvo} A pair $(K,*)$ is called a { \it discrete semiconvo} if the following conditions hold.
 \begin{itemize}
 \item The map $* : K \times K \rightarrow M_{F,p}(K)$ satisfies the associativity condition $$ (\delta_m*\delta_n)*\delta_k = \delta_m* (\delta_n*\delta_k)\,\,\, \text{for all}\, m, n, k \in K.$$
 \item There exists (necessarily unique) element $e \in K$ such that $$ \delta_m*\delta_e = \delta_e*\delta_m = \delta_m \,\,\,\,\text{for all}\,\, m \in K.$$ 
 \end{itemize}\end{defn}
 A discrete semiconvo $(K,*)$ is called {\it commutative} if $\delta_m*\delta_n= \delta_n*\delta_m$ for all $m,n \in K.$
 
 \begin{defn} A triplet $(K,*, \vee)$ is called a {\it discrete hypergroup} if 
 \begin{itemize}
 \item $(K,*)$ is a discrete semiconvo,
 \item  $\vee$ is an involution on $K$ that satisfies \begin{itemize}
 \item[(i)]  $(\delta_m*\delta_n \check{)}= \delta_{\check{n}}*\delta_{\check{m}}$ for all $m,n \in K$ and 
 \item[(ii)] $e \in \spt(\delta_m*\delta_{\check{n}})$ if and only if $m=n.$
\end{itemize}  
 \end{itemize} \end{defn}
 A discrete hypergroup $(K,*, \vee)$ is called {\it hermitian} if the involution on $K$ is the identity map, i.e., $\check{m}=m$ for all $m \in K.$ 
 
  Note that a hermitian discrete hypergroup is commutative.
  
   We write $(K,*)$ or $(K,*,\vee)$ as $K$ only if no confusion can arise. 

 Let K be a commutative discrete hypergroup. For a complex-valued function $\chi$ defined on $K,$  we write $\check{\chi}(m):= \overline{\chi(\check{m})}$ and $\chi(m*n) = \int_K \chi\, d(\delta_m*\delta_n)$ for $m,n \in K.$ Now, define two dual objects of $K:$
 $$ \mathcal{X}_b(K)= \left\lbrace\chi \in \ell^\infty(K): \chi \neq 0, \chi(m*n)= \chi(m) \chi(n) \, \text{for all}\,\, m,n \in K \right\rbrace,$$ $$\widehat{K}= \left\lbrace \chi \in \mathcal{X}_b(K) : \check{\chi}= \chi,\,\mbox{i.e.,}\, \chi(\check{m})= \overline{\chi(m)}\,\, \text{for all} \,m \in K  \right\rbrace.$$ Each $\chi \in \mathcal{X}_b(K)$ is called a {\it character} and each $\chi \in \widehat{K}$ is called a {\it symmetric character}. With the topology of pointwise convergence, $\mathcal{X}_b(K)$ and $\widehat{K}$ become compact Hausdorff spaces. In contrast to the group case, these two dual objects need not be the same and also need not have a hypergroup structure. 
 
 Now we give some examples of hypergroups.
\begin{exmp} {\bf Polynomial hypergroups:} \label{2.4}This is a wide and important class of hermitian discrete hypergroups in which hypergroup structures are defined on $\mathbb{Z}_+.$ This class contains Chebyshev polynomial hypergroups of first kind, Chebyshev polynomial hypergroups of second kind, Jacobi hypergroups, Laguerre hypergroups etc. For more details see \cite{Bloom} and \cite{Lasser}. 

For Illustration, we describe CP, the Chebyshev polynomial hypergroup of first kind which arises from the Chebyshev polynomials of first kind. In fact, they define the following convolution `$*$' on $\mathbb{Z}_+:$
$$\delta_m*\delta_n= \frac{1}{2} \delta_{|n-m|}+\frac{1}{2} \delta_{n+m} \,\,\,\text{for}\,\, m,n \in \mathbb{Z}_+.$$  For any $k \in \mathbb{N},$ $K= k \mathbb{Z}_+$ with $*|_{K \times K}$ makes $K$ a discrete hypergroup in its own right. 
\end{exmp} The Chebyshev polynomial hypergroup of second kind $(\mathbb{Z}_+, *)$ arises from the Chebyshev polynomials of second kind and the convolution '$*$' on $\mathbb{Z}_+$ is given by $$ \delta_m*\delta_n= \sum_{k=0}^{\text{min}\{m ,n\}} \frac{|m-n|+2k+1}{(m+1)(n+1)} \delta_{|m-n|+2k}.$$

\begin{exmp} \label{Dk} Let $H_a=\{0,1,2, \ldots, \infty\}, 0<a \leq \frac{1}{2},$ be the one-point compactification of $\mathbb{Z}_+.$ Dunkl and Ramirez \cite{Ramirez} defined a convolution structure on $H_a$ to make it a (hermitian) countable compact hypergroup. For a prime $p,$ let $\Delta_p$ be the ring of p-adic integers and $\mathcal{W}$ be its group of units, that is , $\{x=x_0+x_1p+ \ldots+ x_np^n+ \ldots \in \Delta_p : x_j = 0,1, \ldots,p-1 \, \text{for}\, j \geq 0 \, \text{and} \, x_0 \neq 0  \}$. For $a=\frac{1}{p},$ $H_a$ derives its structure from $\mathcal{W}$-orbits of action of $\mathcal{W}$ on $\Delta_p$ by multiplication in $\Delta_p.$ 
\end{exmp} 
Next, Dunkl and Ramirez make the symmetric dual space $\widehat{H_a}$ of $H_a$ into a hermitian discrete hypergroup. The members of $\widehat{H_a}$ are given by $\{\chi_n : n \in \mathbb{Z}_+\},$ where, for $k \in H_a,$ \begin{eqnarray*}
\chi_n(k)= \begin{cases}  0 & \text{if}\,\,k <n-1, \\ \frac{a}{a-1} & \text{if}\,\,k=n-1, \\ 1 &\text{if}\,\, k \geq n \,\,\, (\text{or}\,\, k = \infty).\end{cases}
\end{eqnarray*}
Then the convolution `$*$' on $K=\mathbb{Z}_+$ identified with $\widehat{H_a}=\{\chi_n : n \in \mathbb{Z}_+\}$ is dictated by pointwise product of functions in $\widehat{H_a},$ that is: 
\begin{eqnarray*}
\chi_m \chi_n &=& \chi_{\text{max}\{m,n\}} \,\,\, \text{for}\,\, m \neq n, \\
\chi_0^2&=&\chi_0, \,\,\,\, \chi_1^2 = \frac{a}{1-a} \chi_0+ \frac{1-2a}{1-a} \chi_1,\\
\chi_n^2&=& \frac{a^n}{1-a} \chi_0+ \sum_{k=1}^{n-1} a^{n-k} \chi_k+\frac{1-2a}{1-a} \chi_n \,\,\,\,\, \text{for}\, n \geq 2.
\end{eqnarray*}

   We call $(K,*)$ a {\it (discrete) Dunkl-Ramirez hypergroup}.
   
    $H_a$ has a good spectral synthesis in the sense that every closed subset of $H_a$ is a set of spectral synthesis for the  Fourier algebra $A(H_a)$ \cite[Theorem 10.6]{Ramirez}. Chilana (now, Ajit Iqbal Singh) and Ajay Kumar \cite{Ajit} have strengthened this further.
\begin{rem} \label{Drem} We note a few more standard facts about a Dunkl-Ramirez hypergroup $(K,*).$ We follow the notation as in $2.1$ and $2.2$ above.
\begin{itemize}
\item[(i)] For $n \in \mathbb{Z}_+,$ $(\mathcal{L}_n \cup \{n\}, *)$ is a subhypergroup of $(K,*).$
\item[(ii)] Let $a \neq \frac{1}{2}.$ For $n \in \mathbb{Z}_+,\, Q_n= \mathcal{L}_n \cup \{n\}$ and as a consequence, for $n \in \mathbb{N},\, \#Q_n \geq 2.$
\item[(iii)] Let $a= \frac{1}{2}.$ 
\begin{itemize}
\item[(a)] For $n \in \mathbb{N}, \,Q_n= \mathcal{L}_n.$ For $n \geq 2,\, \#Q_n \geq 2$ whereas for $n=1, \, \#Q_n=1.$ 
\item[(b)] In fact, $(\mathcal{L}_1 \cup \{1\},*)$ is a group isomorphic to $(\{0,1\}, \text{addition mod}\, 2)$ 
\end{itemize}

\end{itemize}
\end{rem}  
 
  \section{Hypergroups arising from hypergroup deformations of idempotent elements of ``max" semigroups} 
 To begin with, we assume that $(S,\cdot)$ is a commutative  discrete semigroup with identity $e$ such that $E_0(S) \neq \phi.$ For $q \in M_p(S),$  let $Q$ be its support, in short, $\spt(q)$ and $q(j)= q(\{j\})$ for $j \in S.$ Then $Q$ is countable. Observe that $q=\sum_{j \in Q} q(j) \delta_j$ with $q(j)>0$ for each $j \in Q$ and $\sum_{j \in Q} q(j)=1.$ We prefer this to the usual form $\sum_{j \in S} q(j) \delta_j$ with $q(j) \geq 0$ for $j \in S$ and $\sum_{j \in S} q(j)=1,$ unless otherwise stated.
 \begin{defn} A probability measure $q$ on $S$ with $\#Q \geq 2$ will be called \it{non-Dirac.}
 \end{defn}
 \subsection{Motivation} \label{moti}
 We note that a Dunkl-Ramirez hypergroup (as in Example \ref{Dk} above) is a hermitian (hence commutative)  discrete hypergroup $K=\widehat{H_a}\,\, (0<a\leq \frac{1}{2})$ and its convolution `$*$' arises as a hypergroup deformation of the semigroup $(\mathbb{Z}_+, \cdot),$ where $m \cdot n= \mbox{max}\{m, n\}$ in the sense that $\delta_m*\delta_n= \delta_{mn}$ for $m \neq n$ or, $m=n=0$ and for $m=n \neq 0,$ we have
 \begin{eqnarray*}
 \delta_1*\delta_1 &=& \frac{a}{1-a} \delta_0+\frac{1-2a}{1-a}\delta_1,\\
 \delta_n*\delta_n &=&  \frac{a^n}{1-a} \delta_0+ \sum_{k=1}^{n-1} a^{n-k} \delta_k + \frac{1-2a}{1-a} \delta_n \,\,\,\,\, \text{for}\,\, n \geq 2.
 \end{eqnarray*}

 Also, each element of $\mathbb{Z}_+$ is an idempotent in $(\mathbb{Z}_+, \cdot).$ Further, for $n \in \mathbb{Z}_+,$ $(\mathcal{L}_n \cup \{n\}, *)$ arises as a hypergroup deformation of the  finite subsemigroup $(\mathcal{L}_n \cup \{n\}, \text{max})$ of $(\mathbb{Z}_+, \cdot).$ This motivates the construction of a  commutative discrete  semiconvo or discrete hypergroup structure on $S$ for a commutative  discrete semigroup $(S, \cdot)$ by deforming the product. We elaborate as follows.
 
 \subsection{Hypergroups deformations of  ``max" semigroups } \label{avi} We now confine our attention to  a ``max" semigroup $(S, <, \cdot)$ as in Item \ref{lsg}(i)(a). We assume that $S$ has an identity $e$ and it is equipped with the discrete topology.
 We try to deform this discrete semigroup $(S, \cdot)$ into a  hermitian (hence commutative) discrete hypergroup by deforming the product on the diagonal of $S \backslash \{e\}.$ 
\subsubsection{{\bf Preparatory material}} \label{prem} (i) For a discrete space $K$ and a complex-valued regular Borel or a non-negative measure $\mu$ on $K,$ we usually write $\mu(j)$ for $\mu(\{j\}).$

(ii)(a) Let $(K,*)$ be a discrete hermitian hypergroup. Then by \cite[Theorem 7.1A]{Jewett} the Haar measure $\lambda$ on $K$ is given by: $\lambda(e)=1 $ and for $e \neq n \in K,$ $\lambda(n)= \frac{1}{(\delta_n*\delta_n)(e)}.$  

 (b) Let $K$ be a Dunkl-Ramirez hypergroup $ \widehat{H_a} \,(0< a \leq \frac{1}{2}).$ Then $\lambda(n)= \frac{1-a}{a^n}$ for all $n \in \mathbb{N}.$ Further, for $n \in \mathbb{N},$  we have  
\begin{eqnarray*}
\delta_n*\delta_n(0) &=& \frac{a^n}{1-a}= \frac{1}{\lambda(n)},\\ \delta_n*\delta_n(k)&=& a^{n-k} =
\frac{\lambda(k)}{\lambda(n)} \,\,\text{for}\,\,1\leq k <n \,\, \text{and}\\ \delta_n*\delta_n(n)&=& \frac{1-2a}{1-a}= 1- \sum_{0 \leq k<n} (\delta_n*\delta_n)(k)= 1- (\delta_n*\delta_n)(\mathcal{L}_n)= \frac{\lambda(n)-\lambda(\mathcal{L}_n)}{\lambda(n)}.
\end{eqnarray*} 
(c) $(\delta_n*\delta_n)(n)=0$ for some $n \in \mathbb{N}$ if and only if $a=\frac{1}{2}$ if and only if $(\delta_n*\delta_n)(n)=0$ for all $n \in \mathbb{N}.$ In this case, $\lambda(n)=2^{n-1}$ for $n \in \mathbb{N}.$

{\setlength{\parskip}{1.5em}

We try to replace $(\mathbb{Z}_+,<, \cdot)$ in the Dunkl-Ramirez hypergroups by a discrete ``max" semigroup $(S,<, \text{max})$ with identity in our next theorem.}

We give a set of necessary and sufficient conditions for $(S,*)$ to become a  hermitian discrete hypergroup with convolution product `$*$' defined as follows: 
 \begin{eqnarray*}
 \delta_m*\delta_n = \delta_n*\delta_m &=& \delta_{m\cdot n} (= \delta_{\text{max}\{m,n\}}) \,\,\,\,\mbox{for}\, m,n \in S\, \text{with}\, m \neq n, \, \text{or},\, m=n=e,\\ \delta_n*\delta_n &=& q_n \,\,\,\,\,\,\,\,\,\,\, \text{for} \, n \in S\backslash \{e\}.
 \end{eqnarray*}
  Here, $q_n$ is a probability measure on $S$ with finite support $Q_n$ containing $e$ and has the form $\sum_{j \in Q_n} q_n(j) \delta_j$ with $q_n(j)>0$ for $j \in Q_n$ and $\sum_{j \in Q_n} q_n(j)=1.$
  
   This is equivalent to looking for conditions on $S,$ $Q_n$'s, an $S$-tuple $\{v_n\}_{n \in S}$ in $[1, \infty)$ with $v_e=1$ and $v_n= \frac{1}{q_n(e)}$ for $n \in S \backslash \{e\}$  and $q_n(j)$ for $j \in Q_n \backslash \{e\}, \,n \in S \backslash\{e\}.$
  
  
 \begin{thm} \label{Ordered} Let $(S,<, \cdot)$ be a discrete (commutative) ``max" semigroup with identity $e$ and ` $*$' and other related symbols as above. Then $(S,*)$ is a hermitian discrete hypergroup if and only if the following conditions hold. 
 \begin{itemize}
 \item[(i)] Either $S$ is finite or $(S,<,\cdot)$ is isomorphic to $(\mathbb{Z}_+,<, \text{max}).$
 \item[(ii)] For $n \in S  \backslash \{e\},$ we have $\mathcal{L}_n \subset Q_n \subset \mathcal{L}_n \cup \{n\}.$ 
 \item[(iii)]  If $\#S > 2,$ then for $ e \neq m <n $ in $S,$ we have 
 \begin{itemize}
 \item[(a)] $q_n(e)=q_n(m)q_m(e)$ and 
 \item[(b)] $q_n(e) \left( 1+ \sum_{e \neq k \in \mathcal{L}_n} \frac{1}{q_k(e)} \right) \leq 1;$ 
 \end{itemize} 
 or, equivalently, with $v_n= \frac{1}{q_n(e)}$ for $n \in S,$
 \item[(iii)']  If $\#S > 2,$ then for $ e \neq m <n $ in $S,$ we have 
 \begin{itemize}
 \item[(a)]$q_n(m)= \frac{v_m}{v_n}$ and 
 \item[(b)] $\sum_{k \in \mathcal{L}_n}v_k \leq v_n.$ 
 \end{itemize} \end{itemize} \end{thm}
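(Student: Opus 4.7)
The plan is to reduce the hypergroup axioms on $(S,*)$ to associativity on triples and read off the stated conditions. The involution axioms are essentially free: since the involution is the identity on $S$ and the prescribed convolution $*$ is symmetric in its two arguments, the anti-multiplicativity axiom for the involution holds trivially, and the support requirement $e\in\spt(\delta_m*\delta_n)\Leftrightarrow m=n$ follows at once: for $m\neq n$ the support is $\{\max\{m,n\}\}$, which omits $e$ because $e$ is necessarily the minimum of $S$ (from $e\cdot n=n$); for $m=n$ either $m=n=e$ or $e\in Q_n$ by standing assumption. The whole content is therefore associativity.

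For associativity, commutativity of $\max$ will make triples of pairwise distinct elements trivial, and the triple $(m,m,m)$ will check out by symmetry of $*$; the nontrivial case will be $(k,m,m)$ with $k\neq m$. If $k>m$, I will compare $(\delta_k*\delta_m)*\delta_m=\delta_k$ with $\delta_k*q_m=\sum_{j\in Q_m}q_m(j)\,\delta_k*\delta_j$, matching point masses to force $q_m(j)=0$ for every $j\in Q_m$ with $j>m$; varying $k$ over $\{s\in S:s>m\}$ (whenever nonempty) will yield the upper inclusion $Q_m\subset\mathcal{L}_m\cup\{m\}$ of (ii), which is vacuous when $m$ is maximal.

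If $k<m$, both sides must equal $q_m$, and I will split $q_m*\delta_k=\sum_{j\in Q_m}q_m(j)\,\delta_j*\delta_k$ according to the sign of $j-k$. Matching the coefficient of $\delta_i$ for $i<k$ will yield $q_m(i)=q_m(k)q_k(i)$, which at $i=e$ is (iii)(a); the coefficient at $\delta_k$ will collapse, via $q_k$ being a probability measure with support in $\mathcal{L}_k\cup\{k\}$, to a tautology producing no fresh constraint. A contradiction argument -- if some $k$ with $e<k<m$ satisfied $k\notin Q_m$, the same comparison forces $Q_m\cap\mathcal{L}_k=\emptyset$, violating $e\in Q_m$ -- will give the lower inclusion $\mathcal{L}_m\subset Q_m$ of (ii). The fact that $q_n$ is a probability measure together with (iii)(a) will then give
\[
q_n(e)\Bigl(1+\sum_{e\neq k\in\mathcal{L}_n}\frac{1}{q_k(e)}\Bigr)=\sum_{k\in\mathcal{L}_n}q_n(k)\leq 1,
\]
which is (iii)(b), equivalently $\sum_{k\in\mathcal{L}_n}v_k\leq v_n$ in the $v$-parametrization.

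Condition (i) will then drop out of (ii): the inclusion $\mathcal{L}_n\subset Q_n$ with $Q_n$ finite makes every initial segment finite, so $n\mapsto|\mathcal{L}_n|$ is a strictly increasing injection $S\to\mathbb{Z}_+$; finiteness of each $\mathcal{L}_n$ rules out infinite descending chains, so $\{s\in S:s>m\}$ has a least element whenever nonempty, and an induction exhausts $S$ as either $\{e=m_0<m_1<\cdots\}\cong(\mathbb{Z}_+,<)$ or a finite initial segment thereof. For the converse I will simply reverse these computations: under (i)--(iii), each triple-associativity identity follows from (ii) and (iii)(a) via the same comparisons, while the involution axioms hold for free. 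The main obstacle will be the bookkeeping in the $k<m$ sub-case: tracking which of the three index ranges $j<k$, $j=k$, $j>k$ contributes to which coefficient in $q_m*\delta_k$, and verifying that the resulting $\delta_k$-coefficient collapses to a tautology rather than an extra constraint.
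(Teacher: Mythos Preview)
Your proposal is correct and follows essentially the same route as the paper: reduce everything to associativity on triples, with the substantive case being $(\delta_n*\delta_n)*\delta_m=\delta_n*(\delta_n*\delta_m)$, and read off (ii) and (iii) from the resulting identity $q_n*\delta_m=q_n$ (when $m<n$) or $q_n*\delta_m=\delta_m$ (when $m>n$). The only organizational differences are that the paper first extracts $\mathcal{L}_n\subset Q_n$ directly from $e\in Q_n$ (rather than via your contradiction argument) and uses it to prove (i) before turning to the upper inclusion, whereas you do the upper inclusion first and deduce (i) from (ii) at the end; both orderings work.
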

 \begin{proof}
Suppose $(S,*)$ is a hermitian discrete hypergroup. 

We prove (i) in two steps.

{\bf Step ($\alpha$)}. Let $n \in S \backslash \{e\}$ and $m<n$ . Associativity of $(S, *)$ demands that $(\delta_n*\delta_n)*\delta_m=\delta_n*(\delta_n*\delta_m).$ This gives that $q_n*\delta_m=q_n.$ But $e \in Q_n;$ therefore, $m \in Q_n.$ Hence $\mathcal{L}_n \subset Q_n.$ Since $\#Q_n < \infty$ we have $\#\mathcal{L}_n < \infty.$  Clearly, $\mathcal{L}_e= \phi$ is finite.

{\bf Step ($\beta$)}. Assume that $S$ is not finite. Consider any $n \in S$. By Step ($\alpha$) $\mathcal{L}_n \cup \{n\}$ is finite and, therefore,  $\mathcal{U}_n = S \backslash (\mathcal{L}_n \cup \{n\})$ is non-empty. Choose any $m \in \mathcal{U}_n$.  Suppose that $m$ is not immediate successor of $n.$  Then, there exists an element $m' \in S$ such that $n<m'<m.$ So $\mathcal{U}_n \cap \mathcal{L}_m$ is finite and non-empty because $m' \in \mathcal{U}_n \cap \mathcal{L}_m $ and $\mathcal{L}_m$ is finite by Step $(\alpha)$. Since $\mathcal{U}_n \cap \mathcal{L}_m$ is finite we can enumerate its elements in  order, say, $n_1<n_2< \cdot \cdot \cdot <n_s.$  Choose the immediate successor $n_1$ of $n.$ Thus each element $n$ of $S$ has an immediate successor, say, $s(n)$. Now, start with $x_0=e,$
take $x_1= s(e), \,x_2=s(x_1), \cdot \cdot\cdot, x_j=s(x_{j-1})$ and so on.  
Set $W=\{x_j:\, j \in \mathbb{Z}_+\}.$ We claim that $W=S.$ Suppose not; then there exists $t \in S\backslash W.$ Therefore, $t \neq e,$ and hence $t>e.$ But $x_1$ is immediate successor of $e$ so $t>x_1.$ Again, $x_2$ is the immediate successor of $x_1,$ so $t>x_2.$ Repeating this process, we get $t>x_j$ for every $j \in \mathbb{Z}_+.$ Hence $W \subset \mathcal{L}_t.$ This shows that $W$ is finite because $\mathcal{L}_t$ is finite by Step $(\alpha)$. This gives a contradiction. So, $S=W.$ 

 Therefore, either $S$ is finite or $(S,<,\cdot)$ is isomorphic to $(\mathbb{Z}_+,<, \text{max}).$

(ii) Consider any $n \in S \backslash \{e\}$. Let, if possible,  $Q_n \not\subset \mathcal{L}_n \cup \{n\}$. Then there exists $ m >n$ with $m \in Q_n$. The associativity of `$*$' demands that $(\delta_n*\delta_n)*\delta_m= \delta_n*(\delta_n*\delta_m),$  which in turn gives that $q_n*\delta_m=\delta_m.$ Now, L.H.S. $= q_n(m) \delta_m*\delta_m + \sum_{m \neq j \in Q_n} q_n(j) \delta_{jm}=q_n(m) q_m + \sum_{m \neq j \in Q_n} q_n(j) \delta_{jm}.$ By Step (i) $(\alpha), \, \mathcal{L}_m \subset Q_m.$ Now, $Q_m \subset \spt(\text{L.H.S.})=\spt(\text{R.H.S.})= \{m\}.$ But $n \in \mathcal{L}_m.$ Thus, we have $n=m,$ which is a contradiction. Therefore, $Q_n \subset \mathcal{L}_n \cup \{n\}.$ Hence, using Step (i)$(\alpha),$ we get $\mathcal{L}_n \subset Q_n \subset \mathcal{L}_n \cup \{n\}$ for $n \in S \backslash \{e\}.$

(iii) Suppose $\#S>2$ and $ e \neq m <n $ in $S.$ 
\begin{itemize}
\item[(a)] As in the proof of Step(i)$(\alpha),\,$ 
 $q_n*\delta_m= q_n.$ Now, as by (ii), $m \in Q_n,$ we get 
\begin{eqnarray*}
\mbox{L.H.S.} &=&  q_n(e) \delta_m + \sum_{\underset{e \neq j \neq m} {j \in Q_n}} q_n(j) \delta_{jm} + q_n(m) \delta_m*\delta_m \\ &=& q_n(e) \delta_m + \sum_{\underset{e \neq j \neq m} {j \in Q_n}} q_n(j) \delta_{jm} + q_n(m) \sum_{j' \in Q_m} q_m(j') \delta_{j'}\,\, \text{and}
\end{eqnarray*} 
\begin{eqnarray*}
\mbox{R.H.S.} = q_n(e) \delta_e + \sum_{e \neq k \neq m, \, k \in Q_n} q_n(k) \delta_k + q_n(m) \delta_m. 
\end{eqnarray*}
Since $jm \neq e$ for $j,m \in S \backslash \{e\}$ and R.H.S. = L.H.S., we get $ q_n(e)= q_n(m) \,q_m(e).$ 

\item[(b)] 
 Because $n \in S \backslash \{e\},$ we get $  0< \sum_{ j \in \mathcal{L}_n} q_n(j) \leq \sum_{j \in Q_n} q_n(j) =1.$ By (iii)(a), $q_n(j)= \frac{q_n(e)}{q_j(e)}$ for all $ j \in \mathcal{L}_n \backslash \{e\}.$ Therefore, we get $$ q_n(e)\left( 1+ \sum_{e \neq j \in \mathcal{L}_n} \frac{1}{q_j(e)}\right) = q_n(e) +\sum_{e\neq j \in \mathcal{L}_n} \frac{q_n(e)}{q_j(e)}= \sum_{j\in \mathcal{L}_n} q_n(j) \leq 1.$$
\end{itemize}
It can be seen easily that (iii)' is only a rewording of (iii).

For the  converse part, assume that (i)-(iii) all hold. We note right at the outset that, because condition (ii) is satisfied, for $n \in S\backslash \{e\},$ we may write $q_n=\delta_n*\delta_n= \sum_{j \in \mathcal{L}_n \cup \{n\}} q_n(j) \delta_j= \sum_{j \leq n} q_n(j) \delta_j$ with $q_n(j) >0$ for $j \in \mathcal{L}_n$ and $q_n(n) \geq 0.$ 

 To see that $(S,*)$ is a hermitian  discrete hypergroup, first note that the identity element $e$ of semigroup $(S, \cdot)$ works as the identity of $(S,*),$ i.e., $\delta_n*\delta_e= \delta_e*\delta_n=\delta_n$ for every $n \in S.$ 

Next, we show that $e \in \spt(\delta_m*\delta_n)$ if and only if $m=n.$ It is trivial in case $m=n=e.$  If $m =n \neq e $ then $e \in \spt(\delta_n*\delta_n)= Q_n.$ Now, assume that $e \in \spt(\delta_m*\delta_n).$ Let, if possible, $m \neq n,$ so $\delta_m*\delta_n= \delta_{mn}.$ Without loss of generality we can assume that $m<n,$ so $\delta_{mn}=\delta_n$ and $n>e.$ Therefore, $e \in \spt(\delta_m*\delta_n)$ shows that $n =e,$ which is a contradiction. Hence $m=n.$

Now, we prove the associativity condition of $(S,*)$, i.e., for $m,n,k \in S$
\eqspecialnum \begin{equation} \label{Com}
(\delta_m*\delta_n)*\delta_k= \delta_m*(\delta_n*\delta_k).
\end{equation}
If any one or more of $\{m,n,k\}$ are $e$ then \eqref{Com} is trivial. We now consider the cases when none of $m,n$ and $k$ are $e.$ 
\begin{itemize}
\item[Case (i)] If $m \neq n \neq k \neq m,$ we first consider the subcase $m<n<k.$ So, L.H.S. of \eqref{Com} $= \delta_n*\delta_k =\delta_k$ and R.H.S. of \eqref{Com} $= \delta_m*\delta_k=\delta_k.$ Therefore, L.H.S. =R.H.S. Other subcases can be dealt with in a similar way and hence \eqref{Com} holds. 

 \item[Case (ii)] Out of $ m,n,k$ exactly two are distinct. We get $\#S>2.$
\begin{itemize}
\item[($\alpha$)] We consider first the following form of \eqref{Com}:  $ (\delta_n*\delta_n)*\delta_m= \delta_n*(\delta_n*\delta_m)$ with $e \neq n \neq m \neq e,$ which is the same as 
 \eqspecialnum \begin{equation} \label{Dom}
q_n*\delta_m = \delta_n*\delta_{nm}
\end{equation}
 When $n<m,$ R.H.S. of \eqref{Dom} is equal to $\delta_n*\delta_m=\delta_m.$ The condition that $Q_n \subset \mathcal{L}_n \cup \{n\}$ implies that $Q_n\cdot m =\{m\}$ and using this, we get  L.H.S. of \eqref{Dom} $= \sum_{j \in Q_n} q_n(j) (\delta_j*\delta_m)= \sum_{j \in Q_n} q_n(j) \delta_{jm}=\left( \sum_{j \in Q_n} q_n(j) \right)\delta_m =\delta_m$ as $\sum_{j \in Q_n} q_n(j) =1 .$ Therefore, L.H.S.= R.H.S. and hence \eqref{Dom} holds.

We now come to the case when $m<n.$ Then R.H.S. of \eqref{Dom} is $q_n= \sum_{j \leq n} q_n(j) \delta_j$ with $q_n(j)>0$ for $j <n$ and $q_n(n) \geq 0.$ 
  So L.H.S. of \eqref{Dom} 
\begin{eqnarray*}
 &=& \sum_{j \leq n} q_n(j) (\delta_j*\delta_m)= \sum_{\underset {j \neq m}{j\leq n}} q_n(j) \delta_{jm}+ q_n(m) \sum_{k \leq m} q_m(k) \delta_k \\  &=& \sum_{m<j \leq n } q_n(j) \delta_j + \left( \sum_{j <m} q_n(j) \right) \delta_m + \sum_{k<m} q_n(m) q_m(k) \delta_k + q_n(m) q_m(m)\delta_m .
\end{eqnarray*}

By (iii)(a) $ q_n(k)\, q_k(e)= q_n(e)$ for $k<n.$ So we get $ q_n(m)\, q_m(j)= q_n(j)$ for all $j <m.$ Therefore, L.H.S. of \eqref{Dom}
\begin{eqnarray*}
 &=&  \sum_{m<j \leq n} q_n(j) \delta_j + \left( \sum_{j<m} q_n(m) q_m(j) +  q_n(m) q_m(m)\right) \delta_m+  \sum_{k <m} q_n(k) \delta_k \\ &=&  \sum_{m<j \leq n } q_n(j) \delta_j +  q_n(m) \left( \sum_{j \leq m} q_m(j)  \right) \delta_m+  \sum_{k<m} q_n(k) \delta_k.
\end{eqnarray*}
Further, we have $\sum_{j \leq m} q_m(j) =1.$ So,
$\text{L.H.S. of \eqref{Dom} } =  \sum_{k \leq n} q_n(k) \delta_k = q_n. $

\noindent  Therefore, we get L.H.S. of \eqref{Dom} = R. H.S. of \eqref{Dom} and hence \eqref{Dom} holds.
\item[($\beta$)] Now, consider the following form of \eqref{Com} with $m<n,$ $(\delta_n*\delta_m)*\delta_n= \delta_n*(\delta_m*\delta_n).$ R.H.S. $= \delta_n*\delta_n = (\delta_n*\delta_m)*\delta_n=$ L.H.S. 
\item[($\gamma$)] The case $m>n$ of the form \eqref{Com} as in $(\beta)$ above follows simply because L.H.S. $= \delta_m*\delta_n= \delta_m=$ R.H.S.
\end{itemize}

\item[(iii)] If $m=n=k$ then \eqref{Com} follows from the fact $\delta_s*\delta_n= \delta_n*\delta_s$ for $s \in Q_n.$
\end{itemize}
Hence \eqref{Com} holds and consequently $(S,*)$ is a hermitian discrete hypergroup.
\end{proof}

\begin{rem} \label{b_n} In view of condition (i) in Theorem \ref{Ordered} and Remark \ref{nre}(i) we note that for ``max" semigroups $(S,<, \text{max})$ of interest to us the discrete topology and the order topology coincide (cf. \cite{Zuck}, \cite{Ross}).
\end{rem} 

We now interpret conditions (i)-(iii) in Theorem \ref{Ordered} above. Because a finite ``max" semigroup is isomorphic to $(\mathbb{Z}_k= \{j: 0 \leq j < k\}, <, \text{max})$ for some $k \in \mathbb{N},$ we confine our attention to $\mathbb{Z}_+$ to begin with. 

Let $\mathcal{V}$ be the set of sequences $v=(v_j)_{j \in \mathbb{Z}_+}$ in $[1, \infty)$ which satisfy (i) $v_0=1$ and (ii) $v_n \geq \sum_{j \in \mathcal{L}_n}v_j$ for $n \in \mathbb{N}.$ 

For $n \in \mathbb{N},$ let $u_n= v_n- \sum_{j \in \mathcal{L}_n}v_j.$ Then simple calculations give the following:
 \begin{eqnarray*} 
v_0 &=& 1 , \\ v_1&=&1+u_1,\\ v_2&=&(2+u_1)+u_2, \\ v_3 &=& (2^2+2u_1+u_2)+u_3, \\ \vdots &=& \vdots, \\ \text{to elaborate},\\ v_n &=& (2^{n-1}+2^{n-2}u_1+ \ldots+u_{n-1})+u_n \,\,\,\ \text{for}\,\,n \geq 3.
\end{eqnarray*} 

Alternatively, we may start with a sequence $u=(u_n)_{n \in \mathbb{N}}$ in $[0, \infty)$ and define $(v_n)_{n \in \mathbb{Z}_+}$ as above. Let $\mathcal{U}$ be the set of such sequences $(u_n)_{n \in \mathbb{N}}$'s. The sets $\mathcal{U}$ and $\mathcal{V}$ have cardinality $\mathfrak {c}.$ 

 The same is true if we confine our attention to domain $\mathbb{Z}_k= \{j: 0 \leq j < k\}$ for $k \in \mathbb{N},$ instead of $\mathbb{Z}_+.$
 
 
 \begin{cor} \label{coro} For each $v \in \mathcal{V}$ (or the corresponding $u \in \mathcal{U}$), there is one and only one (hermitian) hypergroup deformation $(\mathbb{Z}_+,*)$ of $(\mathbb{Z}_+,<, \text{max})$ which satisfies $(\delta_n*\delta_n)(0)= \frac{1}{v_n},$ $n \in \mathbb{Z}_+.$ Further, for this deformation, the convolution `$*$' and the Haar measure $\lambda$ satisfy the following conditions.
  \begin{itemize}
 \item[(i)] $ \lambda(n)=v_n$ for $n \in \mathbb{Z}_+$ and  $\lambda(n)-\lambda(\mathcal{L}_n)=u_n$ for $n \in \mathbb{N}.$
 \item[(ii)] $\lambda(\mathcal{L}_n) \leq \lambda(n)$ for $n \in \mathbb{N}.$
 \item[(iii)] For $n \in \mathbb{N},$ 
 \begin{itemize}
 \item[(a)] $\delta_n*\delta_n(m)= \frac{\lambda(m)}{\lambda(n)}$ for $m<n,$
 \item[(b)] $\delta_n*\delta_n(n)=\frac{\lambda(n)-\lambda(\mathcal{L}_n)}{\lambda(n)},$
 \item[(c)] $\delta_n*\delta_n(m)=0$ for $m>n.$
 \end{itemize}
 \item[(iv)] For $n \in \mathbb{N},$
$\spt(\delta_n*\delta_n) = \begin{cases} \mathcal{L}_n & \text{if}\,\, \lambda(n)=\lambda(\mathcal{L}_n), \\ \mathcal{L}_n \cup \{n\} & \text{if}\,\, \lambda(n)>\lambda(\mathcal{L}_n). 
\end{cases}$
 \end{itemize}
 \end{cor}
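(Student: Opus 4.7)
The plan is to derive this corollary by unpacking Theorem \ref{Ordered} in the case $S = \mathbb{Z}_+$ and then reading off the Haar measure from the formula in \S\ref{prem}(ii)(a). The point is that condition (iii)' of that theorem expresses both the formula for $q_n(m)$ with $m<n$ and the constraint $\sum_{k \in \mathcal{L}_n} v_k \leq v_n$ in exactly the form built into the sets $\mathcal{V}$ and $\mathcal{U}$, so there is essentially nothing new to prove beyond a careful translation.

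For existence and uniqueness, given $v \in \mathcal{V}$, I would set $q_n(0) := 1/v_n$ and more generally $q_n(m) := v_m/v_n$ for $0 \leq m < n$, and finally $q_n(n) := u_n/v_n = 1 - \sum_{m < n} v_m/v_n$. Because $v \in \mathcal{V}$, we have $q_n(n) \geq 0$, and each $q_n$ is then a probability measure supported on $\mathcal{L}_n \cup \{n\}$. With $*$ defined as in Theorem \ref{Ordered}, conditions (i), (ii), (iii)'(a) and (iii)'(b) of that theorem all hold by construction, so $(\mathbb{Z}_+, *)$ is a hermitian discrete hypergroup. Conversely, any such deformation satisfying $(\delta_n*\delta_n)(0) = 1/v_n$ must obey Theorem \ref{Ordered}(iii)'(a), which forces $q_n(m) = v_m/v_n$ for $0 < m < n$; then $q_n(n) = 1 - \sum_{m \leq n-1} q_n(m) = u_n/v_n$ is also forced, so the structure is uniquely determined.

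For the remaining assertions, by \S\ref{prem}(ii)(a) we have $\lambda(0) = 1 = v_0$ and $\lambda(n) = 1/(\delta_n*\delta_n)(0) = v_n$ for $n \in \mathbb{N}$, which gives (i) after noting $\lambda(n)-\lambda(\mathcal{L}_n) = v_n - \sum_{k \in \mathcal{L}_n} v_k = u_n$. Part (ii) is then the inequality $u_n \geq 0$ built into the definition of $\mathcal{V}$. Parts (iii)(a)--(b) are just the defining formulas for $q_n(m)$ rewritten in terms of $\lambda$, using $\lambda(k) = v_k$ for $k \in \mathbb{Z}_+$; part (iii)(c) holds because $Q_n \subset \mathcal{L}_n \cup \{n\}$ by Theorem \ref{Ordered}(ii). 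Part (iv) then drops out: the support always contains $\mathcal{L}_n$ by that same condition, and it contains $n$ if and only if $q_n(n) > 0$, i.e.\ $u_n > 0$, i.e.\ $\lambda(n) > \lambda(\mathcal{L}_n)$.

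I do not anticipate any real obstacle, since the corollary is purely a translation of Theorem \ref{Ordered} into the language of the sequences in $\mathcal{V}$ and $\mathcal{U}$ together with a one-line appeal to the Haar measure formula. The only small bookkeeping point is making the formula $q_n(m) = v_m/v_n$ work uniformly for $0 \leq m < n$, which is consistent exactly because of the normalization $v_0 = 1$ built into the definition of $\mathcal{V}$.
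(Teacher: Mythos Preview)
Your proposal is correct and follows essentially the same approach as the paper: both derive (i)--(iv) by direct appeal to the Haar measure formula in \S\ref{prem}(ii)(a) together with conditions (ii) and (iii)/(iii)' of Theorem~\ref{Ordered}. You spell out the existence/uniqueness argument in more detail than the paper (which leaves it implicit), but the logic is identical.
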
 
 \begin{proof}
 (i) This is immediate from \ref{prem}(ii)(a) and expressions for $(v_n)_{n\in \mathbb{Z}_+}$ and $(u_n)_{n \in \mathbb{N}}$ given above.
 
 (ii) This follows from Theorem \ref{Ordered}(iii)(b).
 
 (iii) We have only to appeal to Theorem \ref{Ordered}(iii)(a).
 
 (iv) This follows from (iii).
 \end{proof}
 
 \begin{rem} We compare Theorem \ref{Ordered} and Corollary \ref{coro} above with the Dunkl-Ramirez hypergroups (Example \ref{Dk} above). We freely use Remark \ref{Drem}, Section \ref{moti} and Item \ref{prem} above.
  \begin{itemize}
\item[(a)] For the Dunkl-Ramirez hypergroup $ \widehat{H_a} \,(0< a \leq \frac{1}{2})$, $q_n(0)= \frac{a^n}{1-a}, \, n \in \mathbb{N}.$ Further, $Q_m= \mathcal{L}_m$ for some $m \in \mathbb{N}$ if and only if $a = \frac{1}{2}$ if and only if $Q_n=\mathcal{L}_n$ for all $n \in \mathbb{N}.$
\item[(b)] For our theorem, for any arbitrarily fixed $m \in S \backslash \{e\},Q_m=\mathcal{L}_m$  if and only if \newline $q_m(e) \left(1+\sum_{e\neq j \in \mathcal{L}_m} \frac{1}{q_j(e)} \right)=1.$ Under the correspondence set up after Theorem \ref{Ordered}, the latter condition  can be shortened to $u_m =0$. 
\item[(c)] Suppose $\#S>2.$ For any strictly increasing sequence $(n_j)$ in $S \backslash \{e\}$ contained in $S,$ considered as a subsemigroup of $(\mathbb{Z}_+,<, \text{max})$ in view of Theorem \ref{Ordered} (i) above, we can find $(q_n(e))_{n \in S}$ leading to a deformation satisfying $Q_{n_j}= \mathcal{L}_{n_j}\, \forall j$ and $Q_n= \mathcal{L}_n \cup \{n\}$ for $n \in S \backslash \{e\}$ but not equal to any $n_j.$ 
\end{itemize}
 \end{rem}
\subsection{The dual objects of $(S,*)$} 
 
We now come to the dual objects $\mathcal{X}_b(S)$ and $\widehat{S}$ of hypergroup $(S,*),$ as in Theorem \ref{Ordered} and Corollary \ref{coro} above. For this purpose, to begin with, we fix any $v \in \mathcal{V}$ (or the corresponding $u \in \mathcal{U}$) and consider the corresponding deformation $(\mathbb{Z}_+,*)$ as obtained in Theorem \ref{Ordered} and Corollary \ref{coro}. We freely follow the concepts, results and notation set up in this process.

\begin{thm} \label{St} Let $(S,<, \cdot) \cong (\mathbb{Z}_+, <, \text{max})$ and other symbols  satisfy the conditions (ii)-(iii) of Theorem \ref{Ordered} and let $(S,*)$ be the corresponding deformed hypergroup with the Haar measure $\lambda$. Then the dual objects  $\mathcal{X}_b(S)$ and $\widehat{S}$ of $(S,*)$ are equal. Equipped with the topology of uniform convergence on compact subsets of $S,$ $\widehat{S}$ can be identified with the one point compactification $\mathbb{Z}_+^* (= \mathbb{Z}_+ \cup \{\infty\})$ of $\mathbb{Z}_+.$  More precisely, the identification is given by $k \mapsto \mychi_k,$ where $\mychi_\infty(n) =1 $ for all $n \in \mathbb{Z}_+,$ and, for $k \in \mathbb{Z}_+,$ $\mychi_k$ is given by 
$$\mychi_k(n) = \begin{cases}1 &\text{if}\,\, n \leq k, \\ \beta_k & \text{if}\,\, n=k+1, \\  0 & \text{if}\,\,n>k+1,
\end{cases}$$ 
where, $\beta_k= \frac{-\lambda(\mathcal{L}_{k+1})}{\lambda(k+1)}= \frac{-\sum_{j \in \mathcal{L}_{k+1}}v_j}{v_{k+1}} =\frac{u_{k+1}}{v_{k+1}}-1= \delta_{k+1}*\delta_{k+1}(k+1)-1=q_{k+1}(k+1)-1.$
 \end{thm}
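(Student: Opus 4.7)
I would begin by characterising every $\chi \in \mathcal{X}_b(S)$. Since $\chi \neq 0$, the identity $\chi(m) = \chi(e*m) = \chi(e)\chi(m)$ forces $\chi(e) = 1$. For $m < n$ the ``max'' convolution gives $\delta_m * \delta_n = \delta_n$, so $\chi(n) = \chi(m)\chi(n)$, producing the rigidity
\[ \chi(n)\,(1-\chi(m)) = 0 \qquad \text{for all } m < n. \]
Setting $k = \sup\{j \in \mathbb{Z}_+ : \chi(j) = 1\} \in \mathbb{Z}_+^*$, if $k = \infty$ then $\chi \equiv 1 =: \chi_\infty$; otherwise $\chi(j) = 1$ for $j \leq k$, $\chi(k+1) \neq 1$, and applying the dichotomy with $m = k+1$ at each $n > k+1$ forces $\chi(n) = 0$ for all $n > k+1$.

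Next, I would pin down $\chi(k+1)$ via the diagonal relation $\chi(n)^2 = \sum_{j \leq n} q_n(j)\chi(j)$. Writing $A_k = \lambda(\mathcal{L}_{k+1})/\lambda(k+1)$ and using Corollary~\ref{coro} to express $q_{k+1}(j) = v_j/v_{k+1}$ for $j \leq k$ and $q_{k+1}(k+1) = 1 - A_k$, the relation at $n = k+1$ becomes
\[ x^2 - (1-A_k)\,x - A_k = 0, \qquad x = \chi(k+1), \]
whose roots are $1$ and $-A_k$; maximality of $k$ rules out the root $1$, so $\chi(k+1) = -A_k = \beta_k$, matching the three equivalent formulas in the statement. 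The remaining task is to verify consistency of the candidate $\chi_k$ at each $n > k+1$: substituting into $\sum_{j \leq n} q_n(j)\chi_k(j)$, only terms with $j \leq k+1$ contribute, and the resulting expression $\sum_{j=0}^{k} v_j/v_n + (v_{k+1}/v_n)\beta_k$ vanishes by the very definition $\beta_k\, v_{k+1} = -\sum_{j=0}^{k} v_j$, matching $\chi_k(n)^2 = 0$. Boundedness $\|\chi_k\|_\infty \leq 1$ then follows from $|\beta_k| = A_k \leq 1$, which is precisely Theorem~\ref{Ordered}(iii)$'$(b).

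Finally, the equality $\mathcal{X}_b(S) = \widehat{S}$ is automatic: each $\chi_k$ is real-valued, and since the hypergroup is hermitian the involution is the identity, so the symmetry condition $\chi(\check{m}) = \overline{\chi(m)}$ reduces to $\chi_k = \overline{\chi_k}$. For the topology, $S$ is discrete, so compact subsets of $S$ are finite and uniform convergence on compact sets coincides with pointwise convergence. Inspecting the step-shape of $\chi_k$: for $k < \infty$, $\chi_{k_\alpha} \to \chi_k$ pointwise iff $k_\alpha = k$ eventually; and $\chi_{k_\alpha} \to \chi_\infty$ pointwise is equivalent to $\chi_{k_\alpha}(n) \to 1$ for every $n$, i.e.\ $k_\alpha \geq n$ eventually for every $n$, i.e.\ $k_\alpha \to \infty$. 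This is exactly the one-point compactification topology on $\mathbb{Z}_+^*$.

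The main delicate point is really only the consistency verification at $n > k+1$ in the second step, where one genuinely uses the recursion $v_n \geq \sum_{j < n} v_j$ and the explicit form of $q_n$ from Corollary~\ref{coro}; everything else is a clean functional-equation analysis.
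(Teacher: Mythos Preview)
Your proposal is correct and follows essentially the same route as the paper: use the off-diagonal relation $\chi(m)\chi(n)=\chi(n)$ for $m<n$ to force a step-shape, solve the quadratic coming from $\chi(k+1)^2=\chi((k+1)*(k+1))$ to pin down the transition value $\beta_k$, verify the diagonal relation at $n>k+1$ reduces to $\sum_{j\le k}v_j+\beta_k v_{k+1}=0$, and finish by noting real-valuedness plus hermitian involution gives $\mathcal{X}_b(S)=\widehat{S}$ and that pointwise convergence recovers the one-point compactification.

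The only noteworthy difference is organizational. The paper first \emph{verifies} that each $\chi_k$ is a character (running through the three cases $m\le k$, $m=k+1$, $m>k+1$) and then proves surjectivity by introducing $w=\min\{j:\chi(t)=0\text{ for all }t\ge j\}$; with that parametrization the root $\beta=1$ of the quadratic is not excluded a priori, so the paper must rule it out by an extra evaluation of $\chi(w*w)$, and must also separately exclude $w=1$. Your choice of $k=\sup\{j:\chi(j)=1\}$ is slicker: maximality of $k$ kills the root $1$ immediately, and $k\ge 0$ is automatic from $\chi(0)=1$, so two small case analyses disappear. Conversely, the paper's order (existence before uniqueness) makes the ``consistency at $n>k+1$'' check part of the verification step rather than a loose end after the classification. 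Either way the content is identical.
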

 
 \begin{proof} Before we start the proof, it is helpful to give a diagrammatic interpretation (Table \ref{tabamc}) of the functions $\mychi_k$'s, $k \in \mathbb{Z}_+$. As in the proof of Theorem \ref{Ordered}, for $n \in \mathbb{N},$ we may write $q_n= \sum_{j=0}^n q_n(j) \delta_j$ with $q_n(j)>0$ for $0 \leq j<n,$ but $q_n(n) \geq 0$ and $\sum_{j=0}^n q_n(j)=1.$
 
 We note that, for $k \in \mathbb{Z}_+,$ $\beta_k=q_{k+1}(k+1)-1$ and $-1 \leq \beta_k <0.$

    \begin{table}[h!]
\renewcommand{\kbldelim}{(}
\renewcommand{\kbrdelim}{)}
\[
   \kbordermatrix{
    & 0 & 1 & 2 & 3 &\cdots & k & k+1&  \cdots  \\
    \mychi_0 & 1 & \beta_0& 0& 0 & \cdots & 0& 0  & \cdots \\
    \mychi_1 & 1 & 1 & \beta_1 &0& \cdots & 0&0&  \cdots   \\
    \mychi_2 & 1 & 1 & 1 & \beta_2 & \cdots & 0& 0&  \cdots \\
    \mychi_3 & 1 & 1 & 1 & 1 &  \cdots &0& 0&  \cdots \\
    \vdots  & \vdots & \vdots & \vdots & \vdots & \vdots& \vdots & \vdots &  \cdots \\
    \mychi_{k} & 1 & 1 &1 &1 &\cdots &1&\beta_{k} &  \cdots \\
    \mychi_{k+1} & 1 &1 &1 & 1& \cdots & 1 & 1 & \cdots  \\
    \vdots & \vdots & \vdots & \vdots & \vdots & \vdots & \vdots& \vdots & \ddots 
  } 
  \]
  \centering
  \caption{Part of character table of $(S,*)$} \label{tabamc}
\end{table}

 First, we check that for $k \in \mathbb{Z}_+^*=\mathbb{Z}_+ \cup \{\infty\},$ $\mychi_k$  is a character. Clearly, $\mychi_\infty$ is a character. Fix $k \in \mathbb{Z}_+.$ It is easy to see that $\mychi_k(m*n)= \mychi_k(m) \mychi_k(n)$ for all $m \neq n \in \mathbb{Z}_+.$ 
 
 It remains to check that for $m \in \mathbb{Z}_+,$  $(\mychi_k(m))^2= \mychi_k(m*m),$ i.e.,
 \begin{equation} \label{M}
 (\mychi_k(m))^2=  \sum_{j=0}^m q_m(j) \mychi_k(j).
 \end{equation}
 Let $m \in \mathbb{Z}_+.$
 \begin{itemize}
 \item[Case (i)] $m \leq k$ : We have $\mychi_k(m)= 1$ and on the other hand, $\mychi_k(m*m)= \sum_{j=0}^m q_m(j) \mychi_k(j) = \sum_{j=1}^m q_m(j) =1. $ Therefore, \eqref{M} holds.
 
 \item[Case (ii)] $m=k+1$ : We have \begin{eqnarray*}
 \mychi_k(m*m)&=& \sum_{j=0}^{k+1} q_{k+1}(j) \mychi_k(j)= \sum_{j=0}^k q_{k+1}(j)+ q_{k+1}(k+1) (q_{k+1}(k+1)-1)\\ &=& (1-q_{k+1}(k+1))+(q_{k+1}(k+1))^2- q_{k+1}(k+1) \\&=& (q_{k+1}(k+1)-1)^2 = (\mychi_k (m))^2.
 \end{eqnarray*}
 Therefore, \eqref{M} holds.
 \item[Case (iii)] $m>k+1$ : We have $\mychi_k(n)=0$ for $n>k+1.$ 
  So, $\mychi_k(m*m)= \sum_{j=0}^k q_m(j)+ q_m(k+1)(q_{k+1}(k+1)-1).$
 Since by Theorem \ref{Ordered} and by proof of part (iii)(b) of Theorem \ref{Ordered}, we know that for $1 \leq j < m,$ $q_m(j)= \frac{q_m(0)}{q_j(0)}$ and $1-q_{k+1}(k+1) = q_{k+1}(0) \left( 1+ \sum_{j=1}^k \frac{1}{q_j(0)} \right),$ we get 
 $$\mychi_k(m*m)= q_m(0) \left(1+\sum_{j=1}^k 
 \frac{1}{q_j(0)} +\frac{1}{q_{k+1}(0)}(q_{k+1}(k+1)-1) \right)=0 = (\mychi_k(m))^2.$$
 \end{itemize}
 Therefore, $\mychi_k$ is a character. Also, $\mychi_k$ is real-valued and $S$ is hermitian. So, $\mychi_k$ is a symmetric character.
 
  Now, note that for $k_1 \neq k_2 \in \mathbb{Z}_+^*,$ we have $\mychi_{k_1} \neq \mychi_{k_2}.$ We will show that injective map $k \mapsto \mychi_k$ from $\mathbb{Z}_+^*$ to $\mathcal{X}_b(S)$ is onto. 
  
  To prove this take any $\chi \in \mathcal{X}_b(S)$ such that $\chi \not \equiv 1.$ Since $\chi(0)=1,$ there exists a least $s \in \mathbb{N}$ such that $\chi(s) \neq 1.$ Let $m,n \in \mathbb{Z}_+$ with $m<n.$ We have 
  $$\chi(m) \chi(n) = \chi(m*n)= \chi(n).$$
  This shows that either $\chi(m)=1$ or $\chi(n)=0.$ As a consequence, $\chi(n)=0$ for all $n>s.$
  
  Now consider the set $$A=\{j \in \mathbb{N}: \chi(t)=0 \,\text{for all} \, t \geq j \}.$$
  Then $s+1 \in A$ and so $ A \neq \phi.$ Hence, there exists a least element, say, $w$ in $A.$ 
   
   Let, if possible, $w=1.$ Then $\chi(0)=1$ and $\chi(t) =0$ for all $t \geq 1.$ So, for any $m \geq 1,$ we get $\chi(m*m)= \chi(m)\chi(m)=0.$ On the other hand, we have $\chi(m*m)=  \sum_{j=0}^m q_m(j) \chi(j)= q_m(0) >0,$ which is a contradiction. Therefore, $w>1.$ 
   
   Note that $\chi(n)=1$ for all $n < w-1.$ In fact, let,  if possible, $\chi(n_1) \neq 1$ for some $n_1<w-1.$ Therefore we get $\chi(t)=0$ for $t \geq w-1,$ which shows that $w-1 \in A.$ This is a contradiction to the fact  $w$ is the least element of $A.$ 
   
  Now, we come to $\chi(w-1)= \beta$ (say). Then, we have 
  \begin{eqnarray*}
\beta^2 = (\chi(w-1))^2 &=& \chi((w-1)*(w-1))= \sum_{j=0}^{w-1} q_{w-1}(j) \chi(j)\\& =& \sum_{j=0}^{w-2} q_{w-1}(j) + q_{w-1}(w-1) \,\beta.   
\end{eqnarray*}   
This implies that $\beta^2- q_{w-1}(w-1)\, \beta - (1-q_{w-1}(w-1))=0$  and therefore, $\beta = 1$ or $ q_{w-1}(w-1)-1=\beta_{w-2}$ where $w-2\geq 0.$ 

Let, if possible, $\beta =1.$ As $w \in A,$ $\chi(w*w)= \chi(w)\chi(w)=0;$  on the other hand, we have $\chi(w*w) = \sum_{j=0}^{w} q_w(j) \chi(j) =\sum_{j=0}^{w-1} q_w(j) \geq q_w(0) >0.$  Hence, $\beta= \beta_{w-2}.$

Therefore, the character $\chi$ is given by 
$$\chi(n) = \begin{cases} 1 &\text{if}\,\, n \leq w-2,\\ \beta_{w-2} &\text{if}\,\, n=w-1, \\ 0 & \text{if}\,\,n>w-1,
\end{cases}$$
i.e., $\chi= \mychi_{w-2}.$


 Therefore, the map $k \mapsto \mychi_k $ is onto. As an immediate consequence, $\mathcal{X}_b(S) = \widehat{S}.$ 

Further, it is clear from Table \ref{tabamc} that $\widehat{S}$ equipped with the topology of uniform convergence on compact subsets of $S$ (i.e.,  the topology of pointwise convergence in our case) is the one point compactification $\mathbb{Z}_+^*$ of $\mathbb{Z}_+.$
  \end{proof}
  
  \begin{rem} \label{rea} (i) Consider any countable infinite compact Hausdorff space $X.$ Then any  probability measure $\mu$ on $X$ has the form $\mu =  \sum_{x \in X} \mu(x) \delta_x=\sum_{x \in X} \gamma_x \delta_x$ (say), where $\gamma_x \geq 0$ and $\sum_{x \in X} \gamma_x =1.$ Trivially, $\mu$ has compact support. 
In particular, if $X=\widehat{S}$ as in Theorem \ref{St} above, $\mu = \sum_{j \in \mathbb{Z}_+} \myg_j \delta_{j}+ \myg_\infty \delta_\infty,$ where $\myg_j \geq 0\, \,\forall j \in \mathbb{Z}_+^*$ and $\sum_{j \in \mathbb{Z}_+} \myg_j+\myg_\infty =1.$  

Let $C= \left\lbrace  j \in \mathbb{Z}_+^*: \gamma_j >0\right\rbrace.$ Then the support of $\mu$ is given by  $$\overline{C}= \begin{cases}  C & \text{if}\, C \, \text{is finite or } \infty \in C, \\ C \cup \{\infty\} & \text{otherwise}.\end{cases}$$

(ii) For a non-constant complex-valued function $f$ on $\widehat{S} \cong \mathbb{Z}_+^*,$ which is eventually $1$ in the sense that $f(j)=1$ for $j\geq j_0$ for some $j_0 \in \mathbb{N}$,  we have, for  $\mu$ on $\widehat{S}$ as in (i) above, $$\int_{\widehat{S}} f d\mu = \sum_{\overset{j \in \mathbb{Z}_+}{j<j_0}}f(j) \myg_j+\left( 1 - \sum_{\overset{j \in \mathbb{Z}_+}{j<j_0}} \myg_j \right)= 1+  \sum_{\overset{j \in \mathbb{Z}_+}{j<j_0}} (f(j)-1) \myg_j.$$ 
  
(iii) For concepts, notation and details related to our next theorem,  we refer to \cite[ Chapter 2, particularly, Section 2.4]{Bloom}, \cite[Section 6]{Jewett} and \cite{Vrem1}. For the sake of convenience, we state Proposition 2.4.2 of \cite{Bloom} in the form we need as follows.
     
    {\it `` $\widehat{S}$ can be made into a hypergroup (with respect to pointwise multiplication) if for $\chi, \chi' \in \widehat{S},$ there exists a (regular) probability measure $\mu_{\chi,\chi'}=\sum_{j \in \mathbb{Z}_+} \mu_{\chi, \chi'}(j) \delta_{\chi_j}+ \mu_{\chi,\chi'}(\infty) \delta_{\chi_\infty} = \sum_{j \in \mathbb{Z}_+} \myg_j^{\chi, \chi'} \delta_{\chi_j}+ \myg_\infty^{\chi,\chi'} \delta_{\chi_\infty}$ (say) which satisfies \begin{itemize}
    \item[(a)] for $v \in \mathbb{Z}_+, $ $$ \chi(v) \chi'(v) = \sum_{j \in \mathbb{Z}_+} \myg_j^{\chi, \chi'} \mychi_j(v)+\myg_\infty^{\chi,\chi'},$$ 
    \item[(b)] the map $(\chi, \chi') \mapsto \spt(\mu_{\chi,\chi'})$ is continuous on $\mathbb{Z}_+^* \times \mathbb{Z}_+^* \rightarrow$ the space $\mathcal{C}(\mathbb{Z}_+^*)$ of compact subsets of $\mathbb{Z}_+^*$ with Michael topology,
    \item[(c)] $\mychi_\infty \in \spt(\mu_{\chi, \chi'})$ if and only if $\chi=\chi'.$'' 
    \end{itemize}}
    In this case, we may set $\delta_\chi*\delta_{\chi'}= \mu_{\chi,\chi'},$ take involution as identity and make $\widehat{S}$ a hermitian hypergroup, in fact.
     
\end{rem}

\begin{thm} \label{hp} Let $(S,<, \cdot) \cong (\mathbb{Z}_+, <, \text{max})$ and other symbols  satisfy the conditions (ii)-(iii) of Theorem 3.2 and let $(S,*)$ be the corresponding deformed hypergroup with the Haar measure $\lambda$. The dual space $\widehat{S}$ of $(S,*)$ becomes a countable compact hermitian  hypergroup with respect to pointwise multiplication. More precisely, the convolution `$*$' on $\widehat{S}$ is given by $$ \delta_{\mychi_m}*\delta_{\mychi_n} = \begin{cases} \delta_{\mychi_{\emph{min}\{m,n\}}} & \text{for\,}\,m, n \in \mathbb{Z}_+\,\,\text{with}\,\,  m \neq n\,\, \text{or}\,\,m=n=\infty, \\ \sum_{j \in \mathbb{Z}_+} \myg_j^m \delta_{\mychi_j}& \,\,\,\,\,\,\text{otherwise}, \end{cases}$$
where, $\myg_j^m= 0$ for $j<m,$ $\myg_m^m= 1+\beta_m \geq 0,\,$ and for $p \geq 1,$
 $ \myg_{m+p}^m= \prod_{j=m}^{m+p-1} \frac{-\beta_j}{1-\beta_{j+1}}>0,$ and, $\beta_j$'s are as in Theorem \ref{St}.
Further, we also have $ \mathcal{X}_b(\widehat{S})=\widehat{\widehat{S}} \cong (S,*).$ 
\end{thm}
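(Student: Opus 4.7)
The plan is to apply the characterization of dual hypergroups from Remark \ref{rea}(iii) (Bloom--Heyer / Vrem) to the measures $\mu_{\mychi_m,\mychi_n}$ proposed in the statement, and then identify $\widehat{\widehat{S}}$ by repeating the argument of Theorem \ref{St} in the dual setting. First I would check condition (a) of Remark \ref{rea}(iii). For $m \neq n$ in $\mathbb{Z}_+^*$ the identity $\mychi_m(v)\mychi_n(v) = \mychi_{\min\{m,n\}}(v)$ is a short case check on $v$ using Table \ref{tabamc}. For the diagonal $m = n \in \mathbb{Z}_+$ I would not guess the $\myg_j^m$ but derive them by solving $\mychi_m(v)^2 = \sum_{j \in \mathbb{Z}_+}\myg_j^m\mychi_j(v)$ one $v$ at a time. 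Evaluation at $v = 0$ forces $\sum_j \myg_j^m = 1$ (so $\mu_{\mychi_m,\mychi_m}$ is a probability measure); combined with $v = m+1$ it gives $\myg_m^m = 1+\beta_m$; and for $v = m+k+1$ with $k \geq 1$ the vanishing of $\mychi_m(v)^2$ yields $\sum_{j \geq m+k+1}\myg_j^m = -\beta_{m+k}\myg_{m+k}^m$, so subtracting consecutive instances produces the linear recursion $\myg_{m+k+1}^m(1-\beta_{m+k+1}) = -\beta_{m+k}\myg_{m+k}^m$. Iterating from the base case $\myg_{m+1}^m = -\beta_m/(1-\beta_{m+1})$ (obtained by combining the $v = m+1$ and $v = m+2$ equations) produces the stated product formula, and because $\beta_j \in [-1,0)$ by Theorem \ref{St} each $\myg_j^m$ is non-negative with $\myg_{m+p}^m > 0$ for $p \geq 1$.

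Next I would verify conditions (b) and (c) of Remark \ref{rea}(iii). For (c), the off-diagonal supports are singletons $\{\mychi_{\min\{m,n\}}\}$, or $\{\mychi_\infty\}$ when $m = n = \infty$, none of which other than the latter hits $\mychi_\infty$; the on-diagonal supports for finite $m$ contain the infinite set $\{\mychi_j : j > m\}$, whose unique accumulation point in $\mathbb{Z}_+^* \cong \widehat{S}$ is $\mychi_\infty$, so $\mychi_\infty$ lies in the closed support. Hence $\mychi_\infty \in \spt(\mu_{\chi,\chi'})$ precisely when $\chi = \chi'$. For (b), each $\mychi_j$ with $j \in \mathbb{Z}_+$ is isolated in $\mathbb{Z}_+^*$, so the support map is locally constant on $\mathbb{Z}_+ \times \mathbb{Z}_+$; the only non-trivial limits involve $\mychi_\infty$, and continuity there in the Michael topology follows from the previous sentence together with Remark \ref{rea}(i). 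This equips $\widehat{S}$ with a hypergroup structure; it is hermitian because each $\mychi_k$ is real-valued, and countable compact since $\widehat{S} \cong \mathbb{Z}_+^*$.

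For the second dual, for each $n \in S$ I would define $\Phi_n \colon \widehat{S} \to \mathbb{C}$ by $\Phi_n(\mychi_k) = \mychi_k(n)$. The defining identity (a) for $\mu_{\mychi_m,\mychi_{m'}}$ is exactly the multiplicativity $\Phi_n(\mychi_m*\mychi_{m'}) = \Phi_n(\mychi_m)\Phi_n(\mychi_{m'})$, and real-valuedness of the $\mychi_k$'s makes each $\Phi_n$ a symmetric character. The map $n \mapsto \Phi_n$ is injective since $\Phi_n(\mychi_n) = 1$ while $\Phi_{n'}(\mychi_n) \in \{0,\beta_n\}$ for $n' > n$. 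Surjectivity onto $\mathcal{X}_b(\widehat{S})$ is obtained by transcribing the argument of Theorem \ref{St} into the dual setting: any non-trivial character $\Phi$ satisfies $\Phi(\mychi_m)\Phi(\mychi_n) = \Phi(\mychi_{\min\{m,n\}})$ off the diagonal, which forces a least index $w$ with $\Phi(\mychi_w) \neq 1$, and the diagonal quadratic at $\mychi_w$ pins $\Phi(\mychi_w)$ to the correct eigenvalue so that $\Phi = \Phi_n$ for a unique $n$. Finally, the intertwining identity
$$\Phi_n(\mychi)\Phi_{n'}(\mychi) = \mychi(n)\mychi(n') = \mychi(n*n') = \sum_{k \in S}(\delta_n*\delta_{n'})(k)\,\Phi_k(\mychi)$$
transports the convolution of $(S,*)$ to the dual convolution on $\widehat{\widehat{S}}$, giving the isomorphism $\mathcal{X}_b(\widehat{S}) = \widehat{\widehat{S}} \cong (S,*)$.

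The main obstacle is the diagonal part of the first step: confirming that the coefficients $\myg_j^m$ dictated by the $v$-by-$v$ equations are simultaneously non-negative, sum to one, and are consistent across all $v \in \mathbb{Z}_+$ (not just at the index where each was derived). Non-negativity rests on the sign information $\beta_j \in [-1,0)$ from Theorem \ref{St}; summability to one comes from the $v = 0$ equation, and overall consistency from propagation of the two-term recursion above, which itself reflects the multiplicative relation $q_n(j) = v_j/v_n$ of Theorem \ref{Ordered}(iii)$'$.
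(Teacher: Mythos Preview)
Your overall strategy matches the paper's proof closely: derive the $\myg_j^m$ from the lower-triangular system coming from Remark~\ref{rea}(iii)(a), obtain the two-term recursion and product formula, then verify the support and identity conditions, and finally identify $\widehat{\widehat{S}}$ by the analogue of the Theorem~\ref{St} argument. Two points need repair.

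\textbf{The summability step is circular.} You write that ``summability to one comes from the $v=0$ equation,'' but the $v=0$ equation only says that \emph{if} the series $\sum_{j}\myg_j^m$ converges then its value is $1$ (and, once one keeps the $\myg_\infty^m$ term that Remark~\ref{rea}(iii)(a) carries, it says nothing beyond the probability normalisation). The recursion $\myg_{m+p}^m=\frac{-\beta_{m+p-1}}{1-\beta_{m+p}}\myg_{m+p-1}^m$ has factors that can be arbitrarily close to $1$, so summability is not automatic from $\beta_j\in[-1,0)$. The paper closes this gap by proving inductively that the partial sums satisfy $s_p=1+\myg_{m+p}^m\beta_{m+p}$ and then observing
\[
-\myg_{m+p}^m\beta_{m+p}=(-\beta_m)\prod_{j=1}^{p}\frac{-\beta_{m+j}}{1-\beta_{m+j}}\le\Bigl(\tfrac12\Bigr)^{p},
\]
since $x/(1+x)\le\tfrac12$ for $x=-\beta_{m+j}\in(0,1]$. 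This is precisely the ``main obstacle'' you flag, and it needs this explicit estimate (or an equivalent one), not an appeal to $v=0$; it is also what forces $\myg_\infty^m=0$.

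\textbf{The second-dual argument has the order reversed.} Because the off-diagonal product on $\widehat{S}$ is $\min$ rather than $\max$, the relation $\Phi(\mychi_m)\Phi(\mychi_n)=\Phi(\mychi_{\min\{m,n\}})$ gives, for $m<n$, that either $\Phi(\mychi_m)=0$ or $\Phi(\mychi_n)=1$. Hence the set $\{s:\Phi(\mychi_s)\neq 1\}$ is \emph{downward} closed, and one needs its \emph{largest} element $w$, not its least. One must also invoke continuity of $\Phi$ at $\mychi_\infty$ to exclude the possibility that this set is all of $\mathbb{Z}_+$ (otherwise $\Phi\equiv 0$ on $\mathbb{Z}_+$ but $\Phi(\mychi_\infty)=1$, a discontinuity). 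With these two corrections your outline agrees with the paper's proof.
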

\begin{proof}
We will freely use Remark \ref{rea}.

It is clear from Table \ref{tabamc} that $\delta_{\chi_m}*\delta_{\chi_n} := \delta_{\mychi_{\text{min}\{m,n\}}} $ for $m, n \in \mathbb{Z}_+^*$ with $m \neq n$ or $m=n= \infty.$

We now look for a suitable probability measure $\mu_{m,m},$ in short, $\mu_m,$ for $m \in \mathbb{Z}_+.$ Fix $m \in \mathbb{Z}_+.$ By  Remark \eqref{rea}(i) above, we may write $\mu_m =\sum_{j \in \mathbb{Z}_+}\myg_j^m \delta_j+ \myg_\infty^m \delta_\infty,$ where $\myg_j^m \geq 0$ for $j \in \mathbb{Z}_+^*$ and $\sum_{j \in \mathbb{Z}_+} \myg_j^m + \myg_\infty^m =1.$ Now, for any $t \in \mathbb{N},$ consider the function $e_t$ on $\widehat{S}$ defined by $e_t(\mychi_n)= \mychi_n(t)$ for $n \in \mathbb{Z}_+^*.$ We note from Table \ref{tabamc} and Remark \ref{rea}(ii) that the condition in  Remark \ref{rea} (iii)(a) takes the form
$$\mychi_m(t)^2-1 = \sum_{\underset{n<t}{n \in \mathbb{Z}_+}} (\mychi_n(t)-1) \myg_n^m \,\,\,\,\,\, \text{for all}\, t \in \mathbb{N}.$$ 
We may express it as:
 \begin{equation} \label{z}
\left(\begin{array}{cccc}  
1-\beta_0 & 0 &0 & \cdots  \\ 1& 1-\beta_1 & 0 & \cdots\\ 1 &1 & 1-\beta_2 & \cdots \\ \vdots& \vdots &\vdots & \ddots 
  \end{array} \right) \left(\begin{array}{c} \myg_0^m \\ \myg_1^m \\ \myg_2^m \\ \vdots   \end{array} \right) = \left(\begin{array}{c} 1-\mychi_m(1)^2 \\ 1-\mychi_m(2)^2 \\1-\mychi_m(3)^2 \\ \vdots  \end{array} \right). \end{equation}
  We consider any principal truncated submatrix of the infinite matrix on the L.H.S. of \eqref{z} above. It is a lower triangular matrix and its diagonal entries $1-\beta_j$'s are all non-zero, and, therefore, it is invertible.  
 Now, for $j \in \mathbb{N}$ with $j\leq m,$ $1-\mychi_m(j)^2=0.$ So we get $\myg^m_j=0$ for $j<m$ . So the system \eqref{z} can be replaced by the system:
 \begin{equation} \label{z1}
\left(\begin{array}{cccc}  
1-\beta_m & 0 &0 & \cdots  \\ 1& 1-\beta_{m+1} & 0 & \cdots\\ 1 &1 & 1-\beta_{m+2} & \cdots \\ \vdots& \vdots &\vdots & \ddots 
  \end{array} \right) \left(\begin{array}{c} \myg_m^m \\ \myg_{m+1}^m \\ \myg_{m+2}^m \\ \vdots   \end{array} \right) = \left(\begin{array}{c} 1-\beta_m^2 \\ 1 \\1 \\ \vdots  \end{array} \right) \end{equation} 
 and $\myg_j^m=0$ for $j<m,$ if any.
  
  The first two rows of \eqref{z1} readily give, 
 
 $$\myg_m^m= 1+\beta_m  \geq 0,\,\, \text{and}\,\, \myg_{m+1}^m= \frac{-\beta_m}{1-\beta_{m+1}}.$$ 
 
 We note that by Theorem \ref{St}, $\myg_m^m>0$ if and only if $\delta_{m+1}*\delta_{m+1}(m+1)>0.$
 
 Consecutive pairs of rows then give that, for $p \geq 2,$ 
 $$\myg_{m+p}^m = \frac{-\beta_{m+p-1}}{1-\beta_{m+p}} \myg_{m+p-1}^m .$$
 
 So, for $p \geq 1,$
 $$\myg_{m+p}^m = \prod_{j=m}^{m+p-1} \frac{-\beta_j}{1-\beta_{j+1}}>0.$$


Let $s_p= \sum_{j=0}^p \myg_{m+j}^m$  for $p \geq 1.$ First note that $$s_1= \myg_m^m+\myg_{m+1}^m= 1+ \beta_m-\frac{\beta_m}{1-\beta_{m+1}}= 1+ \left(\frac{-\beta_m}{1-\beta_{m+1}} \right) \beta_{m+1}= 1+\myg_{m+1}^m \beta_{m+1}.$$
Assume that $s_p= 1+\myg_{m+p}^m \beta_{m+p}.$ Then 
\begin{eqnarray*}
s_{p+1} &=& s_p+ \myg_{m+p+1}^m=1+ \myg_{m+p}^m\, \beta_{m+p}+ \myg_{m+p+1}^m \\ &=& 1+ \myg_{m+p}^m \beta_{m+p}- \myg_{m+p}^m \frac{\beta_{m+p}}{1-\beta_{m+p+1}}\\ &= & 1+ \myg_{m+p}^m\left( \frac{-\beta_{m+p}}{1-\beta_{m+p+1}} \right) \beta_{m+p+1} = 1+\myg_{m+p+1}^m \beta_{m+p+1}.
\end{eqnarray*} 
Therefore, by mathematical induction we get $s_p= 1+ \myg_{m+p}^m \beta_{m+p}$ for $p \geq 1.$

Let $p \geq 1.$  Then $$\myg_{m+p}^m \,\beta_{m+p}= \beta_m \prod_{j=1}^{p} \frac{-\beta_{m+j}}{1-\beta_{m+j}},$$


which, by using the fact that $-1 \leq  \beta_k <0$ for all $k \in \mathbb{Z}_+,$ gives $$0<-\myg_{m+p}^m \,\beta_{m+p}\leq \left( \frac{1}{2} \right)^{p}.$$
Therefore, $\sum_{j \in \mathbb{Z}_+} \myg_j^m = \lim_{p \rightarrow \infty} s_p = 1.$ So, $\myg_\infty^m= 1- \sum_{j \in \mathbb{Z}_+} \myg_j^m = 0.$
 
  Hence, we may set $\mu_m = \delta_{\mychi_m}*\delta_{\mychi_m} = \sum_{j \in \mathbb{Z}_+}  \myg_j^m\, \delta_{\mychi_j}.$ 
 
 The continuity of the map $(\mychi_m, \mychi_n) \mapsto \spt(\delta_{\mychi_m}*\delta_{\mychi_n})$  is easy to  see by  noting that for $m \neq n$ or $m=n= \infty,\,\spt(\delta_{\mychi_m}*\delta_{\mychi_n}) $ is singleton $\{\mychi_{\text{min}\{m,n\}}\}$ and for $m \in \mathbb{Z}_+,$ $\spt(\delta_{\mychi_m}*\delta_{\mychi_m}) = \left\lbrace \mychi_k \in \widehat{S}: k \geq m \right\rbrace \cup \{\mychi_\infty\}$ or $\left\lbrace \mychi_k \in \widehat{S}: k > m \right\rbrace \cup \{\mychi_\infty\}$ according as $\delta_{m+1}*\delta_{m+1}(m+1)>0$ or $\delta_{m+1}*\delta_{m+1}(m+1)=0.$  
 
It can be readily checked that $\mychi_\infty$ is the identity element. We see that $\mychi_\infty \in \spt(\delta_{\mychi_m}*\delta_{\mychi_n}) $ if and only if $m=n.$ In view of Remark \eqref{rea}(iii),  $\widehat{S}$ is a compact hermitian hypergroup.


For the rest of the proof, it is convenient to index the elements of  $\widehat{S}$ by $\mathbb{Z}_+^*$ under the identification $k \mapsto \mychi_k.$

Now, we calculate the dual space $\mathcal{X}_b(\widehat{S}),$ the space of continuous characters on $\widehat{S}.$ We know that $S \subset \widehat{\widehat{S}}$ via $n \mapsto \myxi_n,$ defined as $\myxi_n(\mychi_k)= \myxi_n(k):= \mychi_k(n)$ for all $k \in \mathbb{Z}_+^*.$ We can figure them out from Table \ref{tabamc}; to elaborate, $\myxi_0 \equiv 1,\, \myxi_1(0)=\beta_0,\, \myxi_1(k)=1$ for $k \geq 1,$ and, for $n\geq 2,$  
$$\myxi_n(k)= \begin{cases} 0 & \text{if}\,\,k<n-1, \\ \beta_{n-1} & \text{if}\,\,k=n-1, \\ 1 &\text{if}\,\, k \geq n. \end{cases}$$

Next, let $ \xi \in \mathcal{X}_b(\widehat{S})$ such that $\xi \not\equiv 1.$ Consider the set 
$$B= \left\lbrace s \in \mathbb{Z}_+: \xi(s) \neq 1 \right\rbrace.$$ 

Then $B \neq \phi.$ Take any $s \in B,$ so $\xi(s)\neq 1.$ Note that for $m \in \mathbb{Z}_+, n \in \mathbb{Z}_+^*$  with $m<n,$ we have $\xi(m)= \xi(m)\xi(n),$ and therefore,  either $\xi(m)=0$ or $\xi(n)=1.$ In particular, we get that for all  $t \in \mathbb{Z}_+ $ with $t <s,$ $\xi(t)=0$  and hence $t \in B.$ 
This implies that $\mathcal{L}_s \cup \{s\} \subset B.$ So either $B= \mathbb{Z}_+$ or $B$ has a maximum element. 

Let, if possible, $B= \mathbb{Z}_+.$ Then, for each $n \in \mathbb{N},$ $\xi=0$ on $\mathcal{L}_n.$ Since this holds for every $n \in \mathbb{N}$ we get $ \xi \equiv 0 $ on $\mathbb{Z}_+.$ Since $\xi(\infty)=1,$ we get that $\xi$ is not continuous, a contradiction. Hence, $B$ has a maximum element, say $w.$  Therefore, it follows from the definition of $w$ that $\xi(k)= 1$ for all $k>w.$ Also, since $\xi(w) \neq 1$ we get $ \xi(k)=0$ for $k <w.$ 

Now, we calculate the value of $\xi(w) = \beta$ (say). We have $$\beta^2= \xi(w)^2= \xi(w*w)= \myg_w^w \beta+\sum_{\underset{j>w}{j \in  \mathbb{Z}_+}} \myg_j^w.  $$ 
This implies that $\beta^2- \myg_w^w \beta -(1-\myg_w^w)=0$ and therefore, $\beta=1$ or $\myg_w^w-1.$ But we know that $\beta = \xi(w) \neq 1$ and hence $\xi(w)= \myg_w^w-1=\beta_w.$
Now, set $n=w+1 \geq 1,$ then $\xi= \xi_n.$

 Hence $(S,*) \cong \mathcal{X}_b(\widehat{S}).$ \end{proof}  
 
 \begin{rem} (i)(a) The last part of Theorem \ref{hp} can be proved directly with the help of \cite[Proposition 2.4.18]{Bloom}. Our proof is easy and self contained.
 
 (b) $\gamma$'s in Theorem \ref{hp} are related via $\myg_j^m= \frac{\myg_j^0}{\myg_m^0}$ for $j>m>0.$
 
 (ii)(a) Any hypergroup $(\widehat{S},*)$ as in Theorem \ref{hp} above may be thought of as hypergroup deformation of the compact linearly ordered semigroup $(\mathbb{Z}_+^*, <, \text{min})$  with the order topology (cf. \cite{Zuck}, \cite{Ross}, \cite{Hofmann}) on the diagonal of $\mathbb{Z}_+ \times \mathbb{Z}_+.$
 
 (b) The class of  hypergroups $\widehat{S}$ as in Theorem \ref{hp} above lies between the class of countable compact hypergroups $H_a$ (see Section 2 above) of \cite{Ramirez} and  the class of compact almost discrete hypergroups of \cite{Voit}.
 
 (iii) Let $(S,<,\text{max})$ be a finite ``max" semigroup of cardinality $n.$ Then it can be identified with $(\mathbb{Z}_n, <, \text{max}).$  Suitably truncated table of Table \ref{tabamc} and truncated submatrices in \eqref{z} and \eqref{z1} can be used to formulate and prove suitable analogues of Theorem \ref{St} and Theorem \ref{hp}. 
 \end{rem}

  \section{semiconvos or hypergroups arising from deformations of commutative discrete semigroups}
We consider  a commutative discrete semigroup $(S,\cdot)$  with identity $e$ such that $E_0(S)$ is not empty. We try to make $(S,\cdot)$ into a commutative discrete semiconvo or discrete hypergroup $(S,*)$ by deforming the product on $\mathcal{D}_{E_0(S)}:=\{(m,m): m \in E_0(S)\},$ the diagonal of $E_0(S),$ or the idempotent diagonal of $S,$ say.
 
 For $n \in E(S),$ let $q_n$ be a probability measure on $S$ with finite support $Q_n$ containing $e.$ We express $q_n= \sum_{j \in Q_n} q_n(j) \delta_j$ with $q_n(j)>0$ for $j \in Q_n$ and $\sum_{j \in Q_n} q_n(j)=1.$ We look for necessary and sufficient conditions on $S$ and $ \{ q_n: n \in E_0(S)\}$ such that $(S, *)$ with `$*$' defined below, is a commutative discrete semiconvo:
 \begin{eqnarray*}
 \delta_m*\delta_n =  \delta_n*\delta_m & = &\delta_{mn} \,\,\,\,\mbox{for}\,\, (m,n) \in S\times S \backslash \mathcal{D}_{E_0(S)},   \\
 \delta_n*\delta_n &= &q_n \,\,\,\,\mbox{for}\,\, n \in E_0(S). 
 \end{eqnarray*}

It is convenient to consider various relationships among the properties of $(S,\cdot)$ and of $(S,*)$ presented in the following theorems before coming to our main Theorem \ref{main}.
\begin{thm} \label{equivalence} Let $(S,\cdot)$ be a commutative discrete semigroup with identity $e.$ Suppose $(S,*)$ is a commutative discrete semiconvo, where `$*$' and other related notation and concepts are as above. Then the following are equivalent. 
\begin{itemize}
\item[(i)]  For $n \in E_0(S),$\, $Q_n \subset E(S).$
\item[(ii)]  $(E(S), \cdot)$ is a max-min type semigroup.
\item[(iii)]  $S$ is inverse-free.
\item[(iv)] $G_1(S)=\{e\},$ i.e., $S$ is action-free.
\item[(v)]For $n \in E_0(S)$ and $m \in \widetilde{S}\, (= S \backslash E(S))$ we have $n \neq nm.$
\end{itemize}
\end{thm}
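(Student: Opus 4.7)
The plan is to close the cycle $(iii) \Rightarrow (iv) \Rightarrow (v) \Rightarrow (i) \Rightarrow (ii) \Rightarrow (iii)$. The single workhorse throughout is associativity of the semiconvo applied to the triple $(\delta_n,\delta_n,\delta_x)$ for $n \in E_0(S)$ and $x \in S$: since $\delta_n*\delta_n = q_n$ and $n^2 = n$, this yields the identity
\[
q_n * \delta_x \;=\; \delta_n * \delta_{nx},
\]
whose right-hand side equals $q_n$ when $nx = n$ and otherwise collapses to $\delta_{nx}$. The left-hand side expands to $\sum_{j \in Q_n} q_n(j)\,\delta_{jx}$ when $x \notin E_0(S)$, and to $q_n(x)\,q_x + \sum_{j \in Q_n \setminus \{x\}} q_n(j)\,\delta_{jx}$ when $x = m \in E_0(S)$. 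Every implication then reduces to reading off a suitably chosen Dirac coefficient.

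The implication $(iii) \Rightarrow (iv)$ is immediate from Proposition \ref{semi}(i). For $(iv) \Rightarrow (v)$, if $nm = n$ with $n \in E_0(S)$, $m \in \widetilde{S}$, then because $m \notin E_0(S)$ the identity reads $\sum_{j \in Q_n} q_n(j)\,\delta_{jm} = q_n$, so the map $j \mapsto jm$ is a surjection of the finite set $Q_n$ onto itself, hence a bijection, producing $j_0 \in Q_n$ with $j_0 m = e$ and thus $m \in G(S)$. For any $n' \in E_0(S)$ with $n' m \neq n'$, the identity becomes $q_{n'} * \delta_m = \delta_{n' m}$; since $n' m = m$ would force $n' = e$ via multiplication by $m^{-1}$, the $\delta_m$-coefficient on the right is zero while on the left it is at least $q_{n'}(e) > 0$. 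Hence $m \in G_1(S) = \{e\}$, contradicting $m \in \widetilde{S}$. For $(v) \Rightarrow (i)$, if $g \in Q_n \cap \widetilde{S}$ then (v) gives $ng \neq n$, so the identity reads $\sum_j q_n(j)\,\delta_{jg} = \delta_{ng}$; the $\delta_g$-coefficient on the left exceeds $q_n(e) > 0$, forcing $ng = g$. The equation then reduces to $q_n * \delta_g = \delta_g$, so $jg = g$ for every $j \in Q_n$; taking $j = g$ yields $g^2 = g$, contradicting $g \in \widetilde{S}$.

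For $(i) \Rightarrow (ii)$, pick distinct $n, m \in E_0(S)$; the product $nm$ is idempotent, and $nm = e$ would give $n = n^2 m = nm = e$, impossible. Suppose $nm \notin \{n, m\}$; the identity reads $q_n * \delta_m = \delta_{nm}$. By (i), $Q_n \subset E(S)$, and $jm = e$ with $j \in E(S)$ forces (by the same argument: $j = j^2 m = jm = j$, hence $j = e$, hence $m = e$) $j = m = e$. The $\delta_e$-coefficient therefore simplifies to $q_n(m)\,q_m(e) = 0$, so $m \notin Q_n$; comparing supports then gives $jm = nm$ for every $j \in Q_n$, and $j = e$ produces $m = nm$, contradicting $nm \neq m$. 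Finally, for $(ii) \Rightarrow (iii)$, suppose $g \in G(S) \setminus \{e\}$ (so $g \notin E(S)$); for each $n \in E_0(S)$, invertibility of $g$ rules out $ng = g$ (else $n = e$) and a $\delta_g$-coefficient comparison rules out $ng \notin \{n, g\}$, so $ng = n$ and $g \in Q_n$ by the surjection argument, placing $g \in G_1(S)$. Choosing two distinct $n, m \in E_0(S)$ and using (ii) to get $nm \in \{n, m\}$, one may by relabelling assume $nm = n$; the identity then reads $q_n * \delta_m = q_n$, and a careful tracking of the coefficients (exploiting the forced presence of $g \in Q_n$ with $gm = m$, together with the extra $q_n(m)\,q_m$ term) produces an element of $Q_n$ that is a left-inverse of the idempotent $m$, forcing $m = e$, a contradiction.

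The main obstacle is the final step $(ii) \Rightarrow (iii)$: when $m \in E_0(S)$ the identity $q_n * \delta_m = q_n$ acquires the $q_n(m)\,q_m$ term, so the clean bijection $j \mapsto jm$ that worked in $(iv) \Rightarrow (v)$ is no longer available. Extracting a contradiction requires threading the previously derived constraint $g \in Q_n$ through the coefficient analysis; the degenerate case $|E_0(S)| = 1$ must be handled separately, via the cyclic structure of $\langle g \rangle \subseteq G(S)$ and the order-theoretic consequences on $Q_n$.
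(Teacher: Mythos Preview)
Your cycle $(iii)\Rightarrow(iv)\Rightarrow(v)\Rightarrow(i)\Rightarrow(ii)\Rightarrow(iii)$ differs from the paper's scheme $(i)\Rightarrow(iv)\Rightarrow(iii)\Rightarrow(v)\Rightarrow(i)$ together with $(iii)\Leftrightarrow(ii)$, but your first four links are correct and in places tidier than the paper's: your bijection argument for $(iv)\Rightarrow(v)$ is shorter than the paper's three-case analysis for $(iv)\Rightarrow(iii)$, and your $(i)\Rightarrow(ii)$ via the $\delta_e$-coefficient is essentially the paper's $(iii)\Rightarrow(ii)$ rerouted through (i).

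The obstacle you flag at $(ii)\Rightarrow(iii)$ is genuine and cannot be removed: that implication is \emph{false} as stated. Take $S=\{e,a,n\}$ with $a^2=e$, $n^2=n$, $an=na=n$ (a one-idempotent truncation of the paper's Example~2.2). Then $E(S)=\{e,n\}$ is trivially max-min type, while $a\cdot a=e$ with $a\neq e$ shows $S$ is not inverse-free. Setting $q_n=\tfrac12\delta_e+\tfrac12\delta_a$ gives a commutative discrete semiconvo: the only nontrivial associativity check is $(\delta_n*\delta_n)*\delta_a=\delta_n*(\delta_n*\delta_a)$, and both sides equal $q_n$ since $q_n*\delta_a=\tfrac12\delta_a+\tfrac12\delta_e=q_n$. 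Thus (ii) holds but (i), (iii), (iv), (v) all fail. The paper's own proof of $(ii)\Rightarrow(iii)$ simply invokes Proposition~2.2(ii), which asserts that a max-min $S$ is inverse-free; that proposition does not apply when only $E(S)$ is assumed max-min, so the paper shares exactly the gap you ran into. Your sketched coefficient-tracking for this step could not have been completed; the equivalence in fact holds only among (i), (iii), (iv), (v), with (ii) a strictly weaker consequence of any of them.
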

\begin{proof}
We prove the following implications: (i) $\Rightarrow$ (iv) $\Rightarrow$ (iii) $\Rightarrow$ (v) $\Rightarrow$ (i) and (iii) $\Leftrightarrow $ (ii).
\begin{itemize}
\item[(i) $\Rightarrow $(iv)]. Assume $ Q_n \subset E(S)$ for $n \in E_0(S).$ Consider any such $n.$ Let, if possible, $G_1(S) \neq \{e\}$ and  choose any $m (\neq e) \in G_1(S)$ so that $mn=n \neq m.$ As $(S,*)$ is a semiconvo, the associativity of `$*$' demands that $(\delta_m*\delta_n)*\delta_n=\delta_m*(\delta_n*\delta_n).$ L.H.S. is equal to $\delta_{mn}*\delta_n = \delta_n*\delta_n=q_n$ and R.H.S. is equal to $\delta_m*q_n.$ Since both sides are equal, we get $q_n= q_n*\delta_m.$ Now, $m \in \spt(q_n*\delta_m).$ So $m \in Q_n,$ which is a contradiction. Hence, $G_1(S)=\{e\},$ i.e., $S$ is action-free.  
\item[(iv) $\Rightarrow$ (iii)]. Assume that $S$ is not inverse-free. Then $G(S)$ does not act as the identity on $E_0(S).$ So there exists $k \in E_0(S)$ such that $km \neq k$ for some $m (\neq e) \in G(S).$ Note that $k \neq e$ and also $\delta_k \neq q_k$ since $e \in \spt(q_k).$
Now, there exists $n (\neq e)  \in G(S)$  such that $mn=e.$ Note that $kn \neq n$ because if $kn=n$ then $k=e$ by multiplying by $m$ on both sides, which is not true. Similarly, $km \neq m.$
Consider the three different cases:\\ (A) $km \neq kn,$\,\, (B) $km =kn \notin E_0(S),$ and \,\,(C) $km=kn \in E_0(S).$ 

 Now, because $(S,*)$ is a semiconvo, the associativity of $`*$' demands that
\eqspecialnum \begin{equation}\label{3} 
(\delta_{km}*\delta_{kn})*\delta_k = \delta_{km}*(\delta_{kn}*\delta_k).
\end{equation}

Case (A). When $km \neq kn.$ First, note that $kn \neq k$ because if $kn=k$ then by multiplying both sides by $m,$ we get $k=km$ so that $kn=k=km,$ which can not happen. The L.H.S. of \eqref{3} is equal to $\delta_{k^2mn}* \delta_k= q_k$ because $k \in E_0(S)$ and R.H.S. of \eqref{3} $= \delta_{km}*\delta_{k^2n}= \delta_{km}*\delta_{kn} = \delta_{k^2mn}= \delta_k$ because $km \neq kn$ and $k \in E_0(S).$ Since $\delta_k \neq q_k,$ \eqref{3} does not hold.  

Case (B). When $km =kn \notin E_0(S).$ Then L.H.S. of \eqref{3} is equal to $\delta_{k^2mn}*\delta_k = \delta_k*\delta_k=q_k$ because $k \in E_0(S).$ And R.H.S. of \eqref{3} is equal to  $ \delta_{km}*\delta_{kn} = \delta_{k^2mn}=\delta_k$ because $kn \notin E_0(S)$ and $k \in E_0(S).$  Therefore, \eqref{3} does not hold.

Case (C). When $km = kn \in E_0(S).$ As $(S,*)$ is a semiconvo, the associativity of `$*$' demands that
\eqspecialnum \begin{equation}\label{4} 
(\delta_{k}*\delta_{kn})*\delta_{n} = \delta_{k}*(\delta_{km}*\delta_n).
\end{equation}
Now, L.H.S.= $\delta_{k^2n}*\delta_n= \delta_{kn}*\delta_n= \delta_{kn^2}$ and R.H.S.= $\delta_{k}*\delta_{kmn}=\delta_k*\delta_k =q_k.$ Since $e\in \spt(q_k)= \spt(\text{R.H.S.})=\spt(\text{L.H.S.}) = \{kn^2\},$ we get $kn^2=e,$ so that $k \in G(S)$ and $k \in G(S) \cap E(S)= \{e\},$ which is a contradiction. Therefore, \eqref{4} does not hold.  
 
 Therefore, in all three cases the associativity condition  in Definition \ref{semiconvo} does not hold so that $(S,*)$ is not a semiconvo, which is a contradiction to the assumption. Hence, $S$ is inverse-free.
 \item[(iii) $\Rightarrow$ (v)]. Let, if possible, $n=nm.$ The associativity of `$*$' demands that for $m,n \in S, \,(\delta_n*\delta_n)*\delta_m = \delta_n*(\delta_n*\delta_m).$ Now, as $n \in E_0(S)$ and $m \in \widetilde{S},$ we get $q_n*\delta_m= \delta_n*\delta_{nm}= q_{n}.$  Therefore, $\sum_{j \in Q_n} q_n(j) (\delta_j*\delta_m)= \sum_{k \in Q_n} q_n(k) \delta_k.$ But as $m \in \widetilde{S}, $ we get $$ q_n(e) \delta_m + \sum_{e \neq j \in Q_n} q_n(j) \delta_{jm}= q_n(e) \delta_e+ \sum _{e \neq k \in Q_n} q_n(k) \delta_k.$$
This shows that there exists a $j_e (\neq e) \in Q_n $ such that $j_em=e$ which implies that $j_e=m=e $ because $S$ is inverse-free. This is a contradiction.
 Therefore, $n \neq nm.$ 
 \item[(v)$\Rightarrow$ (i)]. Let, if possible, there exist $ m \notin E(S),$ which is in $Q_n.$ Since $n \neq m,$ by the associativity of `$*$', we have  $q_n*\delta_m = \delta_n*\delta_{nm}.$ But $n \neq nm$, so we have that $\delta_n*\delta_{nm}=\delta_{n^2m}= \delta_{nm}.$ Therefore,  
$ \sum_{j \in Q_n} q_n(j)(\delta_j*\delta_m)= \delta_{nm}.$  By the definition of `$*$', we get
$$ q_n(e) \delta_m + \sum _{e \neq j \neq m,\, j \in Q_n } q_n(j) \delta_{jm}+ q_n(m) \delta_{m^2}= \delta_{nm}. $$ So $m=nm$ and $m^2=nm.$ This shows that $m=m^2$ and hence $m \in E(S),$ which is a contradiction. Therefore $Q_n \subset E(S).$
\item[(iii) $\Leftrightarrow$ (ii)].  Suppose (iii) holds. We know that $(E(S), \cdot)$ is a subsemigroup of $(S,\cdot).$ Let, if possible, there exist $m,n \in E(S)$ with $m \neq mn \neq n.$ Then $e \neq m \neq n \neq e.$ So, by Proposition \ref{col E0}(iv), $mn \in E_0(S).$ Now, the associativity of `$*$' demands  $$(\delta_m*\delta_n)*\delta_{mn}= \delta_m*(\delta_n*\delta_{mn}).$$ 
We see that L.H.S. = $\delta_{mn}*\delta_{mn}= q_{mn}.$ Further, R.H.S. = $\delta_m*\delta_{nmn}= \delta_m*\delta_{mn}$ using the fact that $n \in E(S).$ As $m \neq mn$ we get R.H.S.= $\delta_{m^2n},$ which in turn $= \delta_{mn}$ because $m \in E(S).$ Thus $q_{mn}= \delta_{mn}.$ Now, $e \in Q_{mn}$ so $mn=e;$ this is not true as it shows that $m,n \in G(S),$ which is a contradiction. Hence $E(S)$ is a max-min type semigroup, i.e., (ii) holds.

The reverse implication follows from Proposition \ref{semi}(ii).\end{itemize} \end{proof}

\begin{thm} \label{Supportive} Let $(S,\cdot)$ be a commutative discrete semigroup with identity $e.$ Suppose $(S,*)$ is a commutative discrete semiconvo, where `$*$' and other related notation and concepts are as above. Then we have the following:
\begin{itemize}
\item[(i)] For $m,n \in E_0(S)$ with $m \neq n \neq nm,$ we have $m \notin Q_n$ and $Q_n \cdot m= \{nm\}.$
\item[(ii)]  Under any (hence all) conditions (i)-(v) of Theorem \ref{equivalence}, we have the following.
\begin{itemize}
\item[(a)] $(E(S),*)$ is a hermitian (hence commutative) discrete hypergroup.
\item[(b)] $E(S)$ is finite or $E(S)$ is isomorphic to $(\mathbb{Z}_+,<,\text{max}),$ where the order on $E(S)$ is defined by $m<n$ if $mn=n \neq m.$
\item[(c)] If $\#E(S) > 2,$ then for $ e \neq m <n $ in $E(S),$ we have the following.
 \begin{itemize}
 \item[($\alpha$)] $q_n(e)=q_n(m) q_m(e)$ and 
 \item[($\beta$)] $q_n(e) \left( 1+ \sum_{e \neq k \in \mathcal{L}_n} \frac{1}{q_k(e)} \right) \leq 1,$
 \end{itemize}
 where for $n \in E(S),$ $\mathcal{L}_n := \{j \in E(S): j<n\} .$
\item[(d)] $\widetilde{S}$ is an ideal of $(S, \cdot).$ 
\item[(e)] For $n \in E_0(S), m \in \widetilde{S}$ we get $Q_n\cdot m=\{nm\}.$ 
\end{itemize} 
\end{itemize}
\end{thm}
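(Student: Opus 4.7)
The strategy is to prove (i), (e), (a)--(c), and (d) in that order; each step rests on associativity of $*$ applied to carefully chosen triples and on analysis of the support of $q_n * \delta_m$. For \textbf{(i)}, I would equate both sides of $(\delta_n * \delta_n) * \delta_m = \delta_n * (\delta_n * \delta_m)$. Since $n$ is idempotent and $nm \neq n$, the right side collapses to $\delta_{n^2 m} = \delta_{nm}$. The left side is $q_n * \delta_m$; if $m \in Q_n$, the diagonal term $q_n(m)(\delta_m * \delta_m) = q_n(m)\, q_m$ would place positive mass at $e$ (since $q_m(e) > 0$), forcing $nm = e$ and hence $n, m \in G(S) \cap E(S) = \{e\}$, contradicting $n, m \in E_0(S)$. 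Thus $m \notin Q_n$, and $q_n * \delta_m = \sum_{j \in Q_n} q_n(j) \delta_{jm} = \delta_{nm}$ yields $Q_n \cdot m = \{nm\}$.

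For \textbf{(e)}, under any condition of Theorem \ref{equivalence}, part (i) of that theorem gives $Q_n \subset E(S)$ so that $m \in \widetilde{S}$ automatically lies outside $Q_n$, and part (v) gives $n \neq nm$; the same associativity computation as in (i) then yields $Q_n \cdot m = \{nm\}$. For \textbf{(a)--(c)}, Theorem \ref{equivalence}(i) shows $\ast : E(S) \times E(S) \to M_{F,p}(E(S))$ is well-defined, and $(E(S), \cdot)$ is a subsemigroup of the commutative $(S, \cdot)$. The semiconvo axioms for $E(S)$ are inherited from $(S,*)$; the hermitian hypergroup condition $e \in \spt(\delta_m * \delta_n) \iff m = n$ is clear in the diagonal case, while for $m \neq n$ in $E(S)$ we have $\spt(\delta_m * \delta_n) = \{mn\}$ and $mn = e$ would force $m, n \in G(S) \cap E(S) = \{e\}$. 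This gives (a). Parts (b) and (c) then follow by viewing $(E(S), \cdot)$ as a ``max'' semigroup under the order $m < n \iff mn = n \neq m$ (Theorem \ref{equivalence}(ii)) and applying Theorem \ref{Ordered}(i) and (iii) to the hermitian discrete hypergroup $(E(S), *)$.

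The main obstacle is \textbf{(d)}. I would argue by contrapositive: assume $s \in \widetilde{S}$, $t \in S$, $st \in E(S)$, and derive a contradiction. The case $t = e$ is trivial, and $t \in E_0(S)$ follows from (e) applied with $n = t$, $m = s$: since $e \in Q_t$, we get $s = e \cdot s \in Q_t \cdot s = \{ts\}$, so $s = st \in E(S)$, contradicting $s \in \widetilde{S}$. The hard case is $t \in \widetilde{S}$. Inverse-freeness (Theorem \ref{equivalence}(iii)) forces $n := st \in E_0(S)$, since $n = e$ would place $s, t \in G(S) \cap \widetilde{S} = \emptyset$. If $s \neq t$, apply associativity $(\delta_n * \delta_s) * \delta_t = \delta_n * (\delta_s * \delta_t)$: the left side is $\delta_{ns} * \delta_t = \delta_s * \delta_t = \delta_n$ (using $ns = s$ from (e)), while the right side is $\delta_n * \delta_n = q_n$, giving $q_n = \delta_n$, impossible as $q_n(e) > 0$ and $n \neq e$. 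If $s = t$, so $n = s^2$, apply $(\delta_s * \delta_s) * \delta_n = \delta_s * (\delta_s * \delta_n)$: the left side gives $\delta_n * \delta_n = q_n$, while the right, using $sn = s$ from (e), gives $\delta_s * \delta_{sn} = \delta_s * \delta_s = \delta_n$, again forcing $q_n = \delta_n$ and completing the contradiction.
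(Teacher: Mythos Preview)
Your proof is correct. The overall strategy—applying associativity of $*$ to carefully chosen triples and analysing supports—matches the paper's. The main organizational difference is that you reverse the logical dependence between (d) and (e): the paper proves (d) first (via the triples $(nk,n,k)$ and $(nk,k,n)$, with $k\neq nk$ coming from condition~(v) of Theorem~\ref{equivalence} in the ideal case) and then deduces (e) from (d), whereas you prove (e) directly from Theorem~\ref{equivalence}(i),(v) and then exploit the consequence $ns=s$ (for $n\in E_0(S)$, $s\in\widetilde{S}$) to handle (d). Your route makes the case $t\in E_0(S)$ of (d) an immediate corollary of (e), at the cost of splitting the case $t\in\widetilde{S}$ into $s\neq t$ and $s=t$; the paper's choice of triple $(nk,n,k)$ treats both subcases uniformly since the computation only uses $n,k\notin E_0(S)$. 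Both orderings are valid and of comparable length. Your argument for (i)—positive mass at $e$ forces $nm=e$, hence $n,m\in G(S)\cap E(S)=\{e\}$—is a slight variant of the paper's non-Dirac-versus-Dirac contradiction, but equally sound.
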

\begin{proof}
(i). Let $m,n \in E_0(S)$ with $m \neq n \neq nm.$ The associativity of `$*$' demands that $(\delta_n*\delta_n)* \delta_m = \delta_n *(\delta_n *\delta_m).$ Now L.H.S. $= q_n*\delta_m.$ Further, R.H.S. $= \delta_n* \delta_{nm}$ which in turn $= \delta_{nm}$ using the fact that $n \neq nm$ and $n \in E_0(S).$ Thus $q_n*\delta_m= \delta_{nm}.$  Let, if possible $m \in Q_n.$ Then L.H.S. = $\sum_{j \in Q_n} q_n(j) (\delta_j*\delta_m)= q_n(e) \delta_m+ q_n(m) q_m+ \sum_{\underset{e \neq j \neq m}{j \in Q_n}} q_n(j) \delta_{jm}.$  Since $e \in Q_m$ and $e \neq m,$ the measure on L.H.S. is non-Dirac but R.H.S. is a Dirac measure, which is a contradiction. Therefore, $m \notin Q_n$ and hence, by $q_n*\delta_m= \delta_{nm},$ we get $Q_n \cdot m = \{nm\}.$

(ii)(a).  We know that $(E(S), \cdot) $ is a subsemigroup of $(S, \cdot)$. Under the assumption, (i) in Theorem \ref{equivalence} is satisfied. Therefore, `$*$' induces a product on $E(S),$ again denoted by `$*$', i.e., $*= *|_{E(S)} : E(S) \times E(S) \rightarrow M_{F,p}(E(S)).$ Since we assume that $e \in Q_n,$ using (iii) of Theorem \ref{equivalence} it is easy to see that for all $m,n \in E(S), \,e \in \spt(\delta_m*\delta_n)$ if and only if $m=n.$ The associativity follows as $(S,*)$ is a discrete semiconvo. Hence, `$*$' induces a hermitian discrete hypergroup structure on $E(S).$

(b). Under the assumption that $E(S)$ is a max-min type semigroup, by Item \ref{lsg}(ii), it can be given a linear order by $m<n$ if $mn=n \neq m$ and made a "max" semigroup with identity $e.$ Therefore, it follows from Theorem \ref{Ordered} that $E(S)$ is finite or $E(S)$ is isomorphic to $(\mathbb{Z}_+,<,\text{max}).$

(c). This follows by (b) above as $(E(S), \cdot)$ is a ``max" semigroup and $(E(S),*)$ is a hermitian discrete hypergroup.  

(d). Let $k,n \in \widetilde{S};$ then by the associativity of `$*$' we get $(\delta_{nk}*\delta_n)*\delta_k= \delta_{nk}*(\delta_n*\delta_k)$ which is equivalent to $\delta_{(nk)^2}= \delta_{nk}*\delta_{nk}$ as $n,k \in \widetilde{S}.$ Under the assumption $S$ is inverse-free, it follows that $nk \neq e.$ This forces $nk \notin E_0(S).$   Therefore, $\widetilde{S}$ is a subsemigroup of $(S, \cdot)$. Now, to show that $ \widetilde{S}$ is an ideal it is enough to show that for $k \in E_0(S)$ and $n \in \widetilde{S}, \,nk \in \widetilde{S}.$ This can be proved in a manner similar to the proof that $\tilde{S}$ is a subsemigroup by using $k \neq nk$ and the associativity condition $(\delta_{nk}*\delta_k)*\delta_n= \delta_{nk}*(\delta_k*\delta_n).$

(e).  Let $n \in E_0(S), m \in \widetilde{S}.$ The associativity of `$*$' demands that $(\delta_n*\delta_n)*\delta_m = \delta_n*(\delta_n*\delta_m).$ By the definition of `$*$', we get $q_n*\delta_m=\delta_n*\delta_{nm}.$ Since, by (d) above, $n \neq nm,$ we get $q_n*\delta_m = \delta_{nm},$ which, using (i), shows that $Q_n \cdot m = \{nm\}.$  
\end{proof}

Now, we state our main theorem.
 \begin{thm} \label{main} Let $(S,\cdot)$ be a  commutative discrete semigroup with identity $e$ such that $S$ is action-free. Let `$*$' and other related notation and concepts be as above. Then $(S, *)$ is a commutative discrete semiconvo if and only if the following conditions hold.
 \begin{itemize}
 \item[(i)]$E(S)$ is finite or $E(S)$ is isomorphic to $(\mathbb{Z}_+,<,\text{max}),$ where the order on $E(S)$ is defined by $m<n$ if $mn=n \neq m.$
  \item[(ii)] $(\widetilde{S}, \cdot)$ is an ideal of $(S, \cdot).$  
  \item[(iii)] $Q_n \subset E(S)$ for $n \in E_0(S).$
 \item[(iv)] If $n \in E_0(S)$ and $m \in \widetilde{S}$ then $Q_n\cdot m =\{nm\}.$  
 \item[(v)] For $n \in E_0(S),$ we have $\mathcal{L}_n \subset Q_n \subset \mathcal{L}_n \cup \{n\},$  where for $n \in E(S),$ $\mathcal{L}_n := \{j \in E(S): j<n\}.$ 
 \item[(vi)]If $\#E(S) > 2,$ then for $ e \neq m <n $ in $E(S),$ we have the following:
 \begin{itemize}
 \item[($\alpha$)] $q_n(e)=q_n(m) q_m(e)$ and 
 \item[($\beta$)] $q_n(e) \left( 1+ \sum_{e \neq k \in \mathcal{L}_n} \frac{1}{q_k(e)} \right) \leq 1.$
 \end{itemize}

 \end{itemize}
 Further, under these conditions, $E(S)$ is a hermitian discrete hypergroup. Moreover, $S$ is a hermitian discrete hypergroup if and only if $S = E(S).$
 \end{thm}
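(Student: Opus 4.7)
The plan is to assemble this theorem from the machinery already developed in Theorems \ref{equivalence} and \ref{Supportive} together with Theorem \ref{Ordered}. Action-freeness, together with the assumption that $(S,*)$ is a semiconvo (in the necessity direction), places us inside the equivalent conditions of Theorem \ref{equivalence}; Theorem \ref{Supportive} then supplies most of the structural conclusions, and Theorem \ref{Ordered} applied to $(E(S),\cdot)$ handles the constraints on the idempotent diagonal.

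For necessity, suppose $(S,*)$ is a commutative discrete semiconvo. Action-freeness is condition (iv) of Theorem \ref{equivalence}, so every one of (i)--(v) there holds; in particular $Q_n\subset E(S)$ for $n\in E_0(S)$, giving (iii), and $(E(S),\cdot)$ is max-min type. Theorem \ref{Supportive}(ii)(d),(e) then yield (ii) and (iv), while Theorem \ref{Supportive}(ii)(a) asserts that $(E(S),*)$ is itself a hermitian discrete hypergroup arising as a ``max'' deformation of $(E(S),<,\cdot)$ with the order $m<n$ iff $mn=n\neq m$. Applying Theorem \ref{Ordered} to this deformation delivers the dichotomy in (i), the support condition in (v), and the probability-weight constraints in (vi).

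For sufficiency, assume (i)--(vi). Commutativity of `$*$' is built into the definition, and $e$ is the identity because $(e,e)\notin \mathcal{D}_{E_0(S)}$. What remains is the associativity identity
\[
(\delta_m*\delta_n)*\delta_k \;=\; \delta_m*(\delta_n*\delta_k), \qquad m,n,k\in S,
\]
which I would treat case by case according to how many of the pairs that arise lie on the deformation diagonal. When no pair does, both sides are Diracs and the identity reduces to associativity of $(S,\cdot)$. The substantive case is $n=k\in E_0(S)$: by (iii), $Q_n\subset E(S)$, so for $m\in\widetilde S$ condition (iv) collapses $q_n*\delta_m$ to $\delta_{nm}$, matching $\delta_n*\delta_{nm}=\delta_{nm}$ since $n\neq nm$; for $m\in E(S)$, both sides live entirely inside $(E(S),*)$, where (i), (v), (vi) are exactly the hypotheses of Theorem \ref{Ordered}, and associativity has already been proved. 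The hypergroup identity-support axiom on the $E(S)$ part is handled the same way; off the diagonal, $e\in\mathrm{spt}(\delta_m*\delta_n)=\{mn\}$ would force $mn=e$, which contradicts inverse-freeness (a consequence of action-freeness in a semiconvo, via Theorem \ref{equivalence}). The main obstacle is the bookkeeping for mixed triples that straddle $E(S)$ and $\widetilde S$: one needs (ii) to keep products in $\widetilde S$, (iii)--(v) to control $Q_n\cdot m$, and the max-semigroup structure on $E(S)$ to ensure that every such computation either collapses to a Dirac on both sides or reduces to an associativity identity already established in Theorem \ref{Ordered}.

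For the closing claims, $(E(S),*)$ is a hermitian discrete hypergroup by Theorem \ref{Ordered} applied directly under (i), (v), (vi). If $S=E(S)$ then $(S,*)$ is this same hypergroup. Conversely, suppose $(S,*)$ is a hermitian discrete hypergroup; then it is a semiconvo, so by Theorem \ref{equivalence} $S$ is inverse-free. For any $m\in\widetilde S$, $(m,m)\notin\mathcal{D}_{E_0(S)}$ gives $\delta_m*\delta_m=\delta_{m^2}$, so the hermitian hypergroup axiom forces $e\in\mathrm{spt}(\delta_m*\delta_m)=\{m^2\}$, hence $m^2=e$; by Proposition \ref{semi}(i) this forces $m=e$, contradicting $m\in\widetilde S$. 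Therefore $\widetilde S=\emptyset$ and $S=E(S)$.
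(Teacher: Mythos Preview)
Your proposal is correct and follows essentially the same route as the paper: necessity is obtained by feeding action-freeness into Theorem~\ref{equivalence} and then invoking Theorem~\ref{Supportive} and Theorem~\ref{Ordered}, while sufficiency is a case-by-case verification of associativity that ultimately reduces to Theorem~\ref{Ordered} on $E(S)$ together with conditions (ii) and (iv) to handle factors from $\widetilde S$. The only cosmetic differences are that the paper organizes the sufficiency cases by how many of $m,n,k$ lie in $E(S)$ versus $\widetilde S$ (rather than by which pairs hit the diagonal), and that your sentence invoking inverse-freeness ``via Theorem~\ref{equivalence}'' inside the sufficiency paragraph is out of place for the semiconvo claim itself---it properly belongs to the ``Further'' claim, after $(S,*)$ has already been shown to be a semiconvo.
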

 \begin{proof} We assume all the conditions and prove that $(S,*)$ is a commutative discrete semiconvo. Since $S$ has an identity, it is enough to check the associativity of `$*$' to prove that $(S,*)$ is a semiconvo. We will prove that for $m,n,k \in S$ 
 \eqspecialnum
\begin{equation}\label{2}
(\delta_m*\delta_n)*\delta_k = \delta_m*(\delta_n*\delta_k). 
\end{equation} 

  We divide the proof of \eqref{2} into various steps.
\begin{itemize}
\item[(i)] If $m,n,k \in \widetilde{S},$ then using the fact the $\widetilde{S}$ is an ideal (hence subsemigroup) of $S$ we have that the product of any two elements is in $\widetilde{S}.$ Therefore, both sides of \eqref{2} become $\delta_{mnk}.$ 
\item[(ii)] If $m,n \in \widetilde{S}$ and $k \in E(S)$ then both sides of \eqref{2} become $\delta_{mnk}$ using the fact 
$\widetilde{S}$ is an ideal of $S.$    
\item[(iii)] Suppose $m \in \widetilde{S}$ and $ n,k \in E(S).$ If $n$ or $k$ is $e$ then \eqref{2} is trivial. Therefore, we can assume that $n,k \in E_0(S).$
\begin{itemize}
\item[(a)] When $n=k \in E_0(S).$ 
The condition \eqref{2} becomes  $(\delta_m*\delta_n)*\delta_n= \delta_m*(\delta_n*\delta_n).$ Now, $\delta_m*(\delta_n*\delta_n)= \delta_m*q_n = \sum_{j \in Q_n} q_n(j) \, (\delta_m*\delta_j).$ Because for $n \in E_0(S),$ $Q_n \subset E(S)$ and $m \cdot Q_n = \{mn\},$ we get R.H.S. of \eqref{2}
\begin{eqnarray*}
 = \delta_m*(\delta_n*\delta_n)= \sum_{j \in Q_n} q_n(j) \delta_{mj}=  \sum_{j \in Q_n} q_n(j) \delta_{mn} = \left( \sum_{j \in Q_n} q_n(j) \right) \delta_{mn}  = \delta_{mn}.
\end{eqnarray*}

 Using the fact $\widetilde{S}$ is an ideal and $n^2=n,$ we get L.H.S. of \eqref{2} $=(\delta_m*\delta_n)*\delta_n= \delta_{mn}.$ Therefore, the  associativity \eqref{2}  holds. 

\item[(b)] For $n \neq k \in E_0(S),$ R.H.S. and L.H.S. of  \eqref{2} both  become $\delta_{mnk}$ using the fact the $\widetilde{S}$ is an ideal in $(S, \cdot).$ Therefore, \eqref{2} holds.
 \end{itemize} 
 \item[(iv)] We now come to cases when $n \in \widetilde{S}$ and $m,k \in E(S),$ or $k \in \widetilde{S}$ and $m,n \in E(S).$ These can be dealt with by arguments similar to those in (ii) and (iii) above.
 
 \item[(v)] When $m,n,k \in E(S),$ associativity of `$*$' can be proved in a manner similar to that for proof of Theorem \ref{Ordered}.
 \end{itemize} 
 Hence, $(S, *)$ is a semiconvo.
 
Converse part of the first part of the theorem follows from Theorem \ref{Ordered} and  Theorem \ref{Supportive}.

Since $(S,*)$ has a semiconvo structure and $(S,\cdot)$ is action-free, it  follows from Theorem \ref{Supportive} that $(E(S),*)$ has a hermitian hypergroup structure.

 Moreover, if $S$ is hermitian hypergroup then for any $ e \neq m \in S,$ $e \in \spt(\delta_m* \delta_m);$ so we get $m \in E_0(S)$ or $m \in \widetilde{S}$ with $m^2=e.$ But $\widetilde{S}$ is an ideal and $e$ in not in $\widetilde{S}.$ So the second condition is not possible, which shows that $m \in E_0(S).$ Therefore, $S \subset E(S).$ Hence $S=E(S).$     \end{proof}

 \begin{rem} \begin{itemize}
 \item[(i)] We can formulate a suitable analogue of Corollary \ref{coro} based on Theorem \ref{main} above.
  
 \item[(ii)] In view of Theorem \ref{Ordered} and Theorem \ref{main} above, identifying $E(S)$ as a finite subset of $\mathbb{Z}_+$ or $\mathbb{Z}_+$ itself, for $n>1, \, \#Q_n \geq 2.$ But $\#Q_1$ can very well be $1.$ As already remarked, this is the case for Dunkl-Ramirez hypergroup for $a= \frac{1}{2}.$ We note that some proofs become simpler, shorter or different when we take $\#Q_n > 1$ for all $n.$
 
 \item[(iii)] R. C. Vrem \cite{Vrem} defined the join of two hypergroups. Because $S= E(S) \sqcup \widetilde{S},$ at the first  glance, our semiconvo $(S,*)$ looks like the join of a hermitian discrete hypergroup $(E(S),*)$ and a discrete semigroup $(\widetilde{S}, \cdot),$ but it is something different from join. 
 \item[(iv)] We can utilize the examples in Section \ref{Basic} to illustrate various parts of  theorems above. In particular, it is a consequence of Theorem \ref{equivalence} that for Example \ref{2.3}, `$*$' does not induce a semiconvo structure on $S.$ 
 \end{itemize}
 \end{rem}
 The following semigroup $(S, \cdot)$ is an example of a semigroup $(S, \cdot)$ satisfying the conditions of Theorem \ref{main} and for which $\widetilde{S}$ is non-empty.
 \begin{exmp} \label{max-sum}For $T= \{1-\frac{1}{r+1}: r \in \mathbb{Z}_+ \},$ let $S = T\cup \mathbb{N}$ with `$\cdot$' defined as follows.
 
$$m\cdot n  = \begin{cases} \text{max}\{m,n\} & \text{if}\,\, m\,\, \text{or}\,\,n \in T, \\ m+n & \text{if}\,\, m,n \in \mathbb{N}. \end{cases}$$
 
 Then $S$ is an inverse-free commutative discrete semigroup with identity $0.$ Further, $E(S)=T$, which is isomorphic to $(\mathbb{Z}_+,<, \cdot).$ Also, $\widetilde{S}=\mathbb{N}$ is a prime ideal in $S.$
\end{exmp} 

\section*{Acknowledgment}

Vishvesh Kumar thanks the Council of Scientific and Industrial Research, India, for its senior research fellowship. He thanks his supervisors Ritumoni Sarma and N. Shravan Kumar for their support and encouragement.

A preliminary version of a part of this paper was included in the invited talk by Ajit Iqbal Singh at the conference ``The Stone-$\check{\mbox{C}}$ech compactification : Theory and Applications, at Centre for Mathematical Sciences, University of Cambridge, July 6-8 2016" in honour of Neil Hindman and Dona Strauss. She is grateful to the organizers H.G. Dales and Imre Leader for the kind invitation, hospitality and travel support. She thanks them, Dona Strauss and Neil Hindman and other participants for useful discussion. She expresses her thanks to the Indian National Science Academy for the position of INSA Emeritus Scientist and travel support.

\end{document}